\newtheorem{thm}{Theorem}[section] 
\newtheorem{lem}[thm]{Lemma} 
\newtheorem{cor}[thm]{Corollary} 
\newtheorem{prop}[thm]{Proposition}
\theoremstyle{definition} 
\newtheorem{rem}[thm]{Remark} 
\theoremstyle{remark}
\numberwithin{equation}{section}
\def\S{\Sigma} 
\def\n{\nabla}
\def\p{\partial}
\def\n{\nabla}
\def\p{\partial}
\def\k{\kappa}
\def\s{\sigma}
\def\ov{\overline}
\def\n{\nabla}
\def\<{\langle}
\def\>{\rangle}
\def\div{{\rm div}}
\def\n{\nabla}
\def\RR{\mathbb{R}}
\def\SS{\mathbb{S}}
\def\p{\partial}
\def\s{\sigma}
\def\ov{\overline}
\def\wh{\widehat}
\def\S{\Sigma}
\def\V{\mathcal V}
\def\R{\mathbb{R}}
\def\I{\mathcal{I}}
\def\C{\mathcal{C}}
\def\T{\mathcal{T}}
\def\L{\mathcal{L}}
\def\ol{\overline}
\def\wh{\widehat}
\def\bt{\mathbf{b}_\theta}
\patchcmd{\abstract}{\scshape\abstractname}{\textbf{\abstractname}}{}{}
\def\@makefnmark{} 
\numberwithin{equation}{section}
\numberwithin{exa}{section}
\begin{document}	
	
\title[A fully nonlinear flow]{A fully nonlinear locally constrained curvature  flow for capillary hypersurface}

\author[X. Mei]{Xinqun Mei}
\address[X. Mei]{Mathematisches Institut, Albert-Ludwigs-Universit\"{a}t Freiburg, Freiburg im Breisgau, 79104, Germany}

\email{\href{mailto:xinqun.mei@math.uni-freiburg.de} {xinqun.mei@math.uni-freiburg.de}}

\author[L. Weng]{Liangjun Weng}
\address[L. Weng]{Dipartimento di Matematica, Universita di Pisa, Pisa, 56127, Italy. Dipartimento Di Matematica, Università degli Studi di Roma "Tor Vergata", Roma, 00133, Italy}

 \email{\href{mailto:liangjun.weng@uniroma2.it}  {liangjun.weng@uniroma2.it}}

\subjclass[2020]{Primary 53C21, 53E40, Secondary 52A40, 35K61.}
	% Please provide a minimum of 5 keywords.
	\keywords{Fully nonlinear curvature flow, capillary hypersurface, quermassintegral, Alexandrov-Fenchel inequalities}

\begin{abstract}
In this article, we study a locally constrained fully nonlinear curvature flow for convex capillary hypersurfaces in half-space. We prove that the flow preserves the convexity,  exists for all time, and converges smoothly to a spherical cap. This can be viewed as the fully nonlinear counterpart of the result in  \cite{MWW}. 	As a byproduct, a high-order capillary isoperimetric ratio  \eqref{iso ratio}  evolves monotonically along this flow, which yields a class of the Alexandrov-Fenchel inequalities.
\end{abstract}
\maketitle	

 \section{Introduction}
In this paper,  we are concerned with constructing the geometric curvature flows to solve isoperimetric type problems for \textit{capillary hypersurfaces} in the half-space. A smooth, compact, embedded hypersurface in $\ov{{\R}_+^{n+1}}$  with boundary supported on $\p{{\R}_+^{n+1}}$ is called capillary hypersurface if it intersects with $\p{{{\R}}_+^{n+1}}$ at a constant \textit{contact angle} $\theta\in (0,\pi)$.  The primary objective of this article is that design a more general curvature flow to study the  Alexandrov-Fenchel inequalities for capillary hypersurfaces in \cite{WWX1}, i.e., flow \eqref{flow with capillary} below, which is a fully nonlinear type flow compared to our previous work together with Wang in \cite{MWW}  on a mean curvature type flow, a semi-linear flow.

Such kind of fully nonlinear type flow is inspired  by the seminal works of Guan-Li  \cite{GL09, GL15, GL18, GL21} for 
	 closed hypersurfaces and is constructed  by using  the Minkowski formulas \cite[Proposition 2.5]{WWX1}  for capillary hypersurface $\S$ in $\ov{\RR_{+}^{n+1}}$, i.e., 
	\begin{eqnarray}\label{minkowski formula0}
		\int_{\Sigma}H_{k-1}\left(1+\cos\theta\<\nu, e\>\right)dA= \int_{\Sigma}H_{k}\<x, \nu\>dA,
	\end{eqnarray}
	where $\nu$ is the unit normal vector of $\S\subset \ov{\R^{n+1}_+}$ and $x$ is the position vector of $\S$ in $\ov{\R^{n+1}_+}$.

The simplest example of capillary hypersurfaces in half-space is the family of the spherical caps lying entirely in $\ov{\RR^{n+1}_{+}}$ and intersecting $\partial {{\RR}^{n+1}_{+}}$ with a constant {contact angle} $\theta\in (0,\pi)$, and it is given by
	\begin{eqnarray}\label{sph-cap}
		\C_{\theta, r}(e):= \left\{x\in \ov{\RR^{n+1}_{+}} \Big |~ |x-r\cos\theta e|=r \right\},~r~\in [0, \infty),
	\end{eqnarray}
	which has radius $r$ and centered at $r\cos\theta e$. Here $e:=(0,\cdots,0,-1)$ is the unit outward normal vector of $\p{\R^{n+1}_+}$ in $\ov{\R^{n+1}_+}$. If without confusion, we just write $\C_{\theta, r}$ for $\C_{\theta, r}(e)$ in the rest of the paper. 

Very recently, Wang-Weng-Xia \cite{WWX1} introduced a class of new quermassintegrals $\V_{k,\theta}(\widehat{\S})$ for capillary hypersurfaces $\S\subset \ov {\RR^{n+1}_+}$ from the variational viewpoint, where $\widehat{\S}$ is the bounded domain in $\ov{\R^{n+1}_+}$ enclosed by $\S$ and $\p {\R^{n+1}_+}$. They are, for $1\leq k\leq n$, \begin{eqnarray}\label{quermassintegrals}\V_{k+1,\theta}(\widehat{\S}):=\frac{1}{n+1}\left(\int_\S H_kdA - \frac{\cos\theta \sin^k\theta  }{n}\int_{\p\S} H_{k-1}^{\p\S}ds \right),\end{eqnarray}where  $H_k$ is the normalized $k$-th mean curvature of $\S\subset \ov{\RR^{n+1}_+}$ and $H_{k-1}^{\p\S}$ is the normalized  $(k-1)$-th mean curvature of $\p\S\subset \RR^n$ (see also  Section  \ref{sec2.1} or  \cite[Section 2.2]{WWX1}). Besides,  
	\begin{eqnarray}\label{V_01}
	\V_{0,\theta}(\widehat{\Sigma}):= |\widehat{ \S}|,~~ \V_{1,\theta}(\widehat{\Sigma}):= \frac{1}{n+1} ( |\S|-\cos\theta |\widehat{\p\S}|),
	\end{eqnarray}where $\wh{\p\S}$ is the bounded domain enclosed by $\p \S$ in  $ \p \ov{\RR^{n+1}_+}$. $\V_{1,\theta}$ is the so-called \textit{capillary area} of $\S$ (see e.g. \cite{Finn}), up to a multiple constant.

The flow is constructed for  a family of  capillary hypersurfaces  in $\ov{{\RR}^{n+1}_{+}}$ given by isometric embeddings $x(\cdot, t): M \rightarrow  \ov{\RR^{n+1}_{+}}$ from a compact $n$-dimensional manifold $M$ with   boundary $\partial M$ ($n\geq 2$) such that
	\begin{eqnarray*}
		{\rm 	int}(\Sigma_{t})=x\left({\rm int}(M), t\right)\subset\mathbb{\RR}^{n+1}_{+},\quad \partial\Sigma_{t}=x(\partial M,t)\subset \partial {\mathbb{\RR}_{+}^{n+1}}, 
	\end{eqnarray*}
	and $x(\cdot, t)$ satisfying
	\begin{equation}\label{flow with capillary}
		\left\{ \begin{array}{llll}
			(\partial_t x)^\perp  &=& \vspace{2mm}\left( 1+\cos\theta\<\nu, e\> -\<x, \nu\> F \right) \nu ,  &
			\hbox{ in }M\times[0,T^*),\\
			\<\nu,e \>& =&  -\cos\theta 
			 & \hbox{ on }\partial M \times [0,T^*),%\\			& x(\cdot,0)  = x_0(\cdot)  & \text{ on }   M\times\{0\}.
		\end{array}\right.
	\end{equation}	with $F=H_k^{\frac 1 k}$ and $T^* \in (0,\infty]$. 

 Other than flows considered in \cite{WWX1} and \cite{HWYZ}, which are inverse curvature type flows,
 flow \eqref{flow with capillary} is a curvature-type flow.
In particular, when $k=1$, i.e., $F= H\slash  n$,  flow \eqref{flow with capillary} has been studied recently by the authors together with Wang in \cite{MWW}. When $k=n$, $F=K^{\frac 1 n}$ is the $n$-th root of the Gauss curvature, \eqref{flow with capillary} is the Gauss curvature type flow.
 
 When $k=1$,  flow \eqref{flow with capillary} reduces to a mean curvature type flow, which is volume-preserving and capillary area-decreasing for functional \eqref{V_01}. However,
for $k\geq 2$,  flow \eqref{flow with capillary} generally does not preserve the enclosed volume or any quermassintegral (say \eqref{quermassintegrals} for instance). Nevertheless, a nice feature of this flow is that it decreases a  \textit{high-order capillary isoperimetric ratio}, which is defined by

\begin{eqnarray}\label{iso ratio}
    \I_{k,\theta}(\widehat{\S}):=\frac{\V_{k-1,\theta} (\widehat{\S}) }{\V_{0,\theta}(\widehat\S)^{\frac{n+2-k}{n+1}}},
\end{eqnarray}for $2\leq k\leq n+1$, see Proposition \ref{monotonicity}.

Another new ingredient in this paper is that the convexity property is preserved along the general locally constrained curvature flows \eqref{flow with capillary}, which include the case of the mean curvature type flow in \cite{MWW} when $k=1$ and Gauss curvature type flow when $k=n$.

The main result of the paper is the following.
	\begin{thm}\label{thm1.1}
	Let $\S_0$ be a strictly convex capillary hypersurface in $\ov{\RR^{n+1}_+}$ and   $\theta\in(0,\frac{\pi}{2}]$, then the solution of flow \eqref{flow with capillary} starting with  $\S_0$ exists for all time. Moreover, 
		$x(\cdot, t)$ smoothly converges to a uniquely determined spherical cap $\C_{\theta, r_{0}}$ around $e$, as $t\to\infty$.		
	\end{thm}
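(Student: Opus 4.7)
\medskip

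The plan is to follow the standard scheme for locally constrained curvature flows, but with particular care for the capillary boundary condition and the new preservation-of-convexity step. First I would establish short-time existence. Since $\Sigma_0$ is strictly convex with capillary angle $\theta\in(0,\pi/2]$, it is star-shaped with respect to the origin after a vertical translation, so I can represent each $\Sigma_t$ as a radial graph over the reference hemisphere/cap $\mathcal{C}_{\theta,1}(e)$ and rewrite \eqref{flow with capillary} as a scalar, fully nonlinear parabolic equation for a function $\rho(\cdot,t)$ on the spherical cap with an oblique (in fact, a capillary Neumann-type) boundary condition coming from $\<\nu,e\>=-\cos\theta$. Standard parabolic theory for fully nonlinear equations with oblique boundary conditions then yields short-time existence on some maximal interval $[0,T^*)$.

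Next I would prove the key monotonicity: that the isoperimetric ratio $\mathcal{I}_{k,\theta}$ in \eqref{iso ratio} is non-increasing along the flow. Differentiating $\mathcal{V}_{k-1,\theta}$ and $\mathcal{V}_{0,\theta}$ via the variational formulas for the capillary quermassintegrals and inserting the normal speed $1+\cos\theta\<\nu,e\>-\<x,\nu\>F$, the boundary contributions cancel thanks to $\<\nu,e\>=-\cos\theta$, and what remains is the integral comparison between $H_{k-1}$ and $H_k$ against the Minkowski kernel $1+\cos\theta\<\nu,e\>$. Using the Minkowski formula \eqref{minkowski formula0} together with the Newton--MacLaurin inequality $H_{k-1}H_{k+1}\leq H_k^2$ (valid in the convex range), one obtains $\tfrac{d}{dt}\mathcal{I}_{k,\theta}(\widehat{\Sigma}_t)\leq 0$, with equality only on spherical caps. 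This is Proposition~\ref{monotonicity}, which I will simply invoke.

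The main obstacle, and the heart of the proof, is the preservation of strict convexity and the attendant a priori estimates. I would prove that the second fundamental form $h^i_j$ stays uniformly positive definite by a tensor maximum principle of Andrews' type applied to the evolution equation of $h^i_j$ under \eqref{flow with capillary}. The interior argument is by now fairly standard in view of Guan--Li's work, using the concavity of $F=H_k^{1/k}$ on the positive cone and the structure of the locally constrained speed. The genuinely delicate part is the boundary analysis: at a point of $\partial M$ where a minimal eigenvalue of $h^i_j$ would first touch zero, one must use the capillary condition $\<\nu,e\>=-\cos\theta$ together with the umbilicity of $\partial\mathbb{R}^{n+1}_+$ to show via a Hopf-type boundary argument that the normal derivative of the eigenfunction has the correct sign, thereby ruling out a first boundary minimum. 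This step will occupy the bulk of the technical work and is where the restriction $\theta\in(0,\pi/2]$ enters (ensuring $\cos\theta\geq 0$ in the boundary terms). Once convexity is preserved, a uniform $C^0$ bound on the radial function follows from the fact that the flow drives $\mathcal{I}_{k,\theta}$ down while the volume cannot degenerate; the $C^1$ bound follows from star-shapedness; the upper bound on $F$ uses the auxiliary test function $(\<x,\nu\>F-1-\cos\theta\<\nu,e\>)$ at a maximum in space-time, and the lower bound on $F$ follows from convexity plus the $C^0$ estimate.

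With uniform $C^2$ estimates in hand, Krylov--Safonov and Schauder theory upgrade these to uniform $C^{k,\alpha}$ bounds, giving long-time existence $T^*=\infty$. For convergence, the monotonicity of $\mathcal{I}_{k,\theta}$ combined with compactness of the flow family produces a subsequential smooth limit $\Sigma_\infty$ on which $\tfrac{d}{dt}\mathcal{I}_{k,\theta}=0$; the equality case of the Newton--MacLaurin inequality forces $\Sigma_\infty$ to be umbilic, hence a spherical cap $\mathcal{C}_{\theta,r_0}$. Uniqueness of the limit radius $r_0$ is then determined by any conserved or monotone geometric quantity along the flow (e.g.\ the enclosed volume identified with $\mathcal{V}_{0,\theta}$ combined with the terminal value of $\mathcal{I}_{k,\theta}$), which pins down a single cap around $e$. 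A standard linearization at this stationary cap, together with the strict monotonicity off the equality case, then promotes subsequential convergence to full smooth convergence as $t\to\infty$.
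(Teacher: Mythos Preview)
Your outline has the right overall shape, but several of the key steps diverge from the paper in ways that create genuine gaps.

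\textbf{Convexity preservation.} You propose a tensor maximum principle for the minimal eigenvalue of $h^i_j$, with a Hopf argument at the boundary. The paper takes a different and more tractable route: it controls the \emph{harmonic curvature} $\bar H=\sum_i \kappa_i^{-1}$ from above. The point is that $\bar H$ is a smooth scalar function whose evolution equation (Proposition~\ref{evo of bar H}) can be written explicitly, and crucially one has the clean boundary inequality $\nabla_\mu \bar H\le 0$ on $\partial M$ whenever $\theta\in(0,\pi/2]$. This boundary sign comes from the inverse-concavity inequality of Lemma~\ref{lem2.5} together with $h_{\alpha\alpha;\mu}=\cot\theta\,h_{\alpha\alpha}(h_{11}-h_{\alpha\alpha})$; it is exactly here, and only here, that $\theta\le\pi/2$ is used. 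Your proposed boundary Hopf argument for the minimal eigenvalue would have to reproduce this sign at a boundary extremum of a non-smooth function, which is considerably harder to make rigorous; the paper's scalar quantity $\bar H$ sidesteps this. Note also that the paper's convexity estimate at the interior maximum of $\bar H$ already \emph{consumes} the bounds on $u$ and on $F$ (Propositions~\ref{lower bound of u}--\ref{lower bound of F}), so those must be established first, not afterwards as in your ordering.

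\textbf{$C^0$ and $F$ bounds.} Your claim that a uniform $C^0$ bound follows from the monotonicity of $\mathcal I_{k,\theta}$ is not justified: monotonicity of integral functionals does not by itself prevent the radial function from blowing up or pinching. The paper obtains $C^0$ control by a barrier (avoidance) argument with spherical caps (Proposition~\ref{upper bound of u}), independently of any monotonicity. Likewise, ``the lower bound on $F$ follows from convexity plus the $C^0$ estimate'' is a gap: convexity and $C^0$ bounds do not prevent $H_k$ from degenerating. The paper proves $F\le \max_M F(\cdot,0)$ directly from the evolution of $F$ and $\nabla_\mu F=0$ (Proposition~\ref{upper bound of F}), and obtains the lower bound via the test function $P=F\bar u$ with $\bar u=\langle x,\nu\rangle/(1+\cos\theta\langle\nu,e\rangle)$, which satisfies $\nabla_\mu P=0$ on $\partial M$ and a favorable interior inequality (Proposition~\ref{lower bound of F}).

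\textbf{Convergence.} Your subsequential-limit argument via the equality case of Newton--MacLaurin is reasonable in spirit, but the paper instead uses the monotonicity of $\mathcal V_{0,\theta}(\widehat{\Sigma_t})$ (which is nondecreasing along the flow) together with the uniform estimates, following \cite{WXiong,WWX1}. In particular $\mathcal V_{0,\theta}$ is \emph{not} preserved for $k\ge 2$, so the limit radius $r_0$ is determined a posteriori from the asymptotic value of the monotone quantity, not from any conserved volume.
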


We focus on the case $\theta\in(0, \frac{\pi}2)$ in the rest of this paper, since $\theta=\frac{\pi}2$ can be reduced to the closed counterpart as studied in \cite{GL15, GLW} and \cite{WXiong, WXiong2}, among more general ambient space and anisotropic setting respectively,  by a simple reflection argument along $\p\RR^{n+1}_+$. The contact angle restriction $\theta\leq \frac \pi 2$ is required due to the similar technical issues as in \cite{WWX1, HWYZ} and \cite{WeX21} for obtaining the $C^2$-estimates. In fact, \eqref{neumann  bar H} is the only place that we need to use this angle restriction.

	As an application of Theorem \ref{thm1.1} and the monotonicity of \eqref{iso ratio} along our flow (cf. Proposition \ref{monotonicity}), we have the following Alexandrov-Fenchel inequality for \eqref{quermassintegrals}.
\begin{cor}\label{Alex} For $n\ge 2$,  let $\Sigma\subset    \ov{\mathbb{R}^{n+1}_+}$ be a convex capillary hypersurface with   $\theta \in (0, \frac{{\pi}}{2} ]$, there holds 
\begin{eqnarray}\label{af ineq}		
 	\left( \frac{  \V_{k,\theta}(\widehat{\S}) }{\bt} \right)^{\frac{1}{n+1-k}}\geq \left(\frac{\V_{0,\theta}(\widehat{\S})}{\bt} \right)^{\frac{1}{n+1}},  \quad  \forall \,  1 \leq k\leq n,		\end{eqnarray}where $(n+1) \bt$ is the capillary area of $\C_{\theta,1}$. 
Moreover, equality holds in \eqref{af ineq} if and only if $\Sigma$  is a spherical cap in \eqref{sph-cap}. 	
\end{cor}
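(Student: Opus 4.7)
The plan is to deduce \eqref{af ineq} by running flow \eqref{flow with capillary} with $F=H_k^{1/k}$ starting from $\S$, combining Theorem \ref{thm1.1} with the monotonicity of the capillary isoperimetric ratio \eqref{iso ratio} along the flow (Proposition \ref{monotonicity}). The first step is to rewrite \eqref{af ineq}, for fixed $1\le k\le n$, in the scale-invariant form
\[
\I_{k+1,\theta}(\widehat{\S})\;\geq\;\I_{k+1,\theta}(\widehat{\C_{\theta,1}}).
\]
Applying \eqref{minkowski formula0} to the spherical cap $\C_{\theta,r}$, on which $\<x,\nu\>=r+r\cos\theta\<\nu,e\>$, together with the homogeneity of the normalized curvatures, gives $\V_{j,\theta}(\widehat{\C_{\theta,r}})=r^{n+1-j}\bt$ for every $0\leq j\leq n+1$. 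Hence $\I_{k+1,\theta}$ is constant on the family $\{\C_{\theta,r}\}_{r>0}$, and the two forms of the inequality coincide after an elementary manipulation with powers of $\bt$.

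Next, assume that $\S$ is smooth and strictly convex and take $\S_0=\S$. By Theorem \ref{thm1.1} the flow exists for all times, preserves strict convexity, and $\S_t$ converges smoothly to some cap $\C_{\theta,r_0}$. Proposition \ref{monotonicity} gives $\tfrac{d}{dt}\I_{k+1,\theta}(\widehat{\S_t})\le 0$, so
\[
\I_{k+1,\theta}(\widehat{\S})\;\geq\;\lim_{t\to\infty}\I_{k+1,\theta}(\widehat{\S_t})\;=\;\I_{k+1,\theta}(\widehat{\C_{\theta,r_0}})\;=\;\I_{k+1,\theta}(\widehat{\C_{\theta,1}}),
\]
which is the reduced form of \eqref{af ineq}. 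For a general convex capillary hypersurface $\S$, I would approximate $\S$ in $C^0$ by smooth strictly convex capillary hypersurfaces with the same contact angle $\theta$ (e.g., by smoothing the support function, or by a brief run of the $k=1$ mean-curvature-type capillary flow studied in \cite{MWW}, which strictly convexifies while respecting the boundary condition), and pass the inequality to the limit using the continuity of $\V_{k,\theta}$ and $\V_{0,\theta}$ on convex capillary bodies that is visible from the integral representation \eqref{quermassintegrals}.

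For the rigidity part, if equality holds in \eqref{af ineq} on a strictly convex $\S$, then $\I_{k+1,\theta}(\widehat{\S_t})$ is constant along the entire flow, so the non-positive integrand produced in the proof of Proposition \ref{monotonicity} must vanish pointwise on each $\S_t$. That integrand is controlled by a Newton--Maclaurin-type inequality whose equality case forces umbilicity; combined with the boundary condition $\<\nu,e\>=-\cos\theta$, this identifies each $\S_t$ with a spherical cap, and in particular forces $\S=\C_{\theta,r}$ for some $r>0$. For a merely convex $\S$ achieving equality, the approximation step sandwiches $\S$ between converging sequences of spherical caps and the same conclusion follows.

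The main obstacle in this plan is the rigidity step: one has to follow the proof of Proposition \ref{monotonicity} closely enough to isolate the equality case of the relevant Newton--Maclaurin-type inequality, and to confirm that the boundary integral on $\p\S$ does not contribute a spurious degeneracy. The genuinely hard analytic input, namely long-time existence, convexity preservation, and smooth convergence of the fully nonlinear flow, is entirely outsourced to Theorem \ref{thm1.1}, so the remainder is bookkeeping with the Minkowski identity \eqref{minkowski formula0} and the quermassintegral formula \eqref{quermassintegrals}.
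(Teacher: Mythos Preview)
Your proposal is correct and follows essentially the same route as the paper: the paper's own proof of Corollary~\ref{Alex} simply invokes Theorem~\ref{thm1.1} together with Proposition~\ref{monotonicity} for the strictly convex case, handles merely convex $\Sigma$ by an approximation argument (citing \cite{GL09}), and defers the equality characterization to \cite{SWX, WeX21, WWX1}. One small index slip to fix: since Proposition~\ref{monotonicity} yields monotonicity of $\I_{j,\theta}$ along the flow with $F=H_j^{1/j}$, to control $\I_{k+1,\theta}$ you must run the flow with $F=H_{k+1}^{1/(k+1)}$ rather than $F=H_k^{1/k}$.
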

We remark that \eqref{af ineq} has been previously proved in \cite{MWW} using the mean curvature type flow. We also refer to \cite{HWYZ} and \cite{MWWX}  using different methods for this, among more general forms of Alexandrov-Fenchel inequalities with the contact angle range $\theta\in(0,\frac \pi 2]$ and $\theta\in(0,\pi)$ respectively. In this paper, we provide a fully nonlinear curvature-type flow approach to proving \eqref{af ineq}. Furthermore, it is worth mentioning that $\C_{\theta, r}$ are the static solution to our flow \eqref{flow with capillary}, and conversely is also true. That is, $\C_{\theta, r}$ encompasses all solutions of
\begin{eqnarray}\label{static eq0}
1 + \cos\theta \<\nu, e\> - H_k^{\frac{1}{k}} \<x, \nu\> = 0.
\end{eqnarray}In fact, this can be easily demonstrated using either the Minkowski integral formula, as shown by  Simon \cite{Simon} for closed hypersurface (i.e. $\theta=\frac \pi 2$ in \eqref{static eq0}), or the capillary Heintze-Karcher's inequality, as discussed by Jia-Wang-Xia-Zhang \cite{JWXZ23, JWXZ} and Wang-Xia \cite{WX24}, respectively.  

 \vspace{.2cm}
 
\subsection*{The rest of the article is structured as follows} In Section \ref{sec2}, we collect some properties of elementary symmetric functions, and basic facts about capillary hypersurfaces and derive the corresponding evolution equations for some geometric quantities along flow \eqref{flow with capillary}. In Section \ref{sec3},  we reduce flow \eqref{flow with capillary} to a scalar parabolic PDE with an oblique type boundary value condition in \eqref{scalar flow with capillary-1}. Then we establish the uniform a priori estimates for the solution of \eqref{flow with capillary}. Furthermore, we prove the long-time existence and the convergence of flow \eqref{flow with capillary}, which finishes the proof of Theorem \ref{thm1.1}.  Corollary \ref{Alex} follows from Theorem \ref{thm1.1} and Proposition \ref{monotonicity}. 
\vspace{.2cm}

\section{Preliminaries}\label{sec2}
In this section,  we collect some basic properties about elementary symmetric functions and recall some basic facts of capillary hypersurface in half-space.  Then, we derive the relevant evolution equations of flow \eqref{flow with capillary}, which will be used to derive the a priori estimates later. 
	
\subsection{Elementary symmetric functions}\label{sec2.1}
	
For any $k=1,2, \cdots, n$, and $\kappa:=(\kappa_{1}, \cdots, \kappa_{n})\in \R^n$, the $k$-th elementary symmetric polynomial functions is defined as
	\begin{eqnarray*}
		\s_{k}(\kappa):=\sum\limits_{1\leq \kappa_{1}<\cdots\kappa_{k}\leq n}\kappa_{1}\cdots\kappa_{k},
	\end{eqnarray*}
	and $\sigma_{0}:=1,~~H_{k}:=\frac{1}{\binom{n}{k}}\s_k(\k).$ 	Recall the G{\aa}rding's cone is defined as
	\begin{eqnarray*}
    \Gamma_k:= \left\{\kappa\in \mathbb{R}^{n}: H_j >0,~~\forall~~1\leq  j\leq k \right\}.
	\end{eqnarray*}
Denote  $\sigma _k (\kappa \left| i \right.)$ the symmetric function with $\kappa_i = 0$ and $\sigma _k (\kappa \left| ij \right.)$ the symmetric function with $\kappa_i =\kappa_j = 0$.
	
The following properties about $\s_k$, including Lemma \ref{lem2.1},  Lemma \ref{lem2.2} and Lemma \ref{lem2.3}, are well-known, one can refer to \cite[Chapter XV, Section 4]{Lie} for a detailed proof.
\begin{lem}\label{lem2.1}
		For $\kappa=(\kappa_{1}, \cdots, \kappa_{n})$ and $1\leq k\leq n$, there holds
		\begin{enumerate}
			\item $ \sigma_k(\k)=\sigma_k(\k|i)+\k_i\sigma_{k-1}(\k|i),  \quad  \forall 1\leq i \leq n.$
			\item $ \sum\limits_{i = 1}^n {\s_{k}(\k|i)}=(n-k)\sigma_k(\k).$
			\item $\sum\limits_{i = 1}^n {\k_i \sigma_{k-1}(\k|i)}=k\sigma_k(\k)$.
			\item $\sum\limits_{i = 1}^n {\k_i^2 \sigma_{k-1}(\k|i)}=\s_1(\k)\s_k(\k)-(k+1)\s_{k+1}(\k)$.
		\end{enumerate}
	\end{lem}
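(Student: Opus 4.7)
The plan is to prove the four identities in sequence, using the combinatorial definition $\sigma_k(\kappa)=\sum_{1\le i_1<\cdots<i_k\le n}\kappa_{i_1}\cdots\kappa_{i_k}$ throughout, and bootstrapping each identity from the previous ones. All four are essentially double-counting statements, and the central move is simply to classify monomials by whether or not they contain the variable $\kappa_i$.

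For \textbf{(1)}, I would split the defining sum of $\sigma_k(\kappa)$ based on whether the multi-index $\{i_1,\dots,i_k\}$ contains $i$: monomials not containing $\kappa_i$ assemble precisely into $\sigma_k(\kappa|i)$, while each monomial containing $\kappa_i$ factors as $\kappa_i\cdot\kappa_{j_1}\cdots\kappa_{j_{k-1}}$ with $j_\ell\neq i$, producing $\kappa_i\sigma_{k-1}(\kappa|i)$. For \textbf{(2)}, I would argue by direct double counting: each monomial $\kappa_{i_1}\cdots\kappa_{i_k}$ in $\sigma_k(\kappa)$ appears in $\sigma_k(\kappa|i)$ for exactly those $n-k$ indices $i\notin\{i_1,\dots,i_k\}$, yielding the factor $n-k$.

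For \textbf{(3)}, the cleanest route is to sum identity (1) over $i$ and apply (2):
\[
n\,\sigma_k(\kappa)=\sum_{i=1}^n\sigma_k(\kappa|i)+\sum_{i=1}^n\kappa_i\,\sigma_{k-1}(\kappa|i)=(n-k)\sigma_k(\kappa)+\sum_{i=1}^n\kappa_i\,\sigma_{k-1}(\kappa|i),
\]
which rearranges to the claimed $k\,\sigma_k(\kappa)$. Finally for \textbf{(4)}, I would multiply (1) by $\kappa_i$ to obtain $\kappa_i^2\sigma_{k-1}(\kappa|i)=\kappa_i\sigma_k(\kappa)-\kappa_i\sigma_k(\kappa|i)$, then sum over $i$ to get
\[
\sum_{i=1}^n\kappa_i^2\,\sigma_{k-1}(\kappa|i)=\sigma_1(\kappa)\sigma_k(\kappa)-\sum_{i=1}^n\kappa_i\,\sigma_k(\kappa|i),
\]
and observe that the last sum equals $(k+1)\,\sigma_{k+1}(\kappa)$ by applying identity (3) at level $k+1$ (reading $\sigma_k(\kappa|i)=\sigma_{(k+1)-1}(\kappa|i)$), which yields the formula.

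Since the argument is elementary combinatorics, there is no substantive obstacle; the only minor bookkeeping is the boundary conventions $\sigma_0(\kappa)=1$ and $\sigma_k(\kappa)=0$ for $k>n$, under which all four identities hold uniformly without special casing.
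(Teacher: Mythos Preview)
Your proof is correct and complete; the combinatorial derivations of (1)--(4) are standard and your bootstrapping from (1)--(2) to (3)--(4) is clean. The paper itself does not supply a proof: it simply declares the identities well known and refers the reader to \cite[Chapter XV, Section 4]{Lie}. Your write-up therefore goes beyond what the paper provides, and it matches the classical argument one finds in that reference.
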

\begin{lem}\label{lem2.2}
For $\kappa \in \Gamma_k$, if $k > l \geq 0$, $ r > s \geq 0$, $k \geq r$, $l \geq s$, there holds %the generalized Newton-Maclaurin inequality as
\begin{eqnarray*} \label{1.2.6}
\left(\frac{H_{k}(\k)}{H_{l}(\kappa)}\right)^{\frac{1}{k-l}}\leq \left(\frac{H_{r}(\k)}{H_{s}(\kappa) }\right)^{\frac{1}{r-s}}.
\end{eqnarray*}	
Equality holds if and only if $\k_1=\cdots=\k_n>0$. 
		Moreover, $H_k^{\frac 1 k}(\k)$ is concave in $\Gamma_k$.
	\end{lem}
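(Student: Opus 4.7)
The plan is to split Lemma \ref{lem2.2} into the comparison inequality and the concavity statement, both of which are classical consequences of the theory of hyperbolic polynomials.

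\textbf{Step 1: Basic Newton inequality.} First I would establish the basic Newton-MacLaurin inequality
\[
H_{k-1}(\kappa)\,H_{k+1}(\kappa) \leq H_k(\kappa)^2, \qquad \kappa \in \Gamma_k,
\]
with equality precisely when all $\kappa_i$ coincide. The standard argument is by induction on $n$ via Rolle's theorem: consider the one-variable polynomial $P(t) = \prod_{i=1}^{n}(t+\kappa_i)$, whose coefficients encode the $\sigma_j(\kappa)$. Differentiating in one of the variables and invoking the Gårding cone structure (together with properties (1)--(4) of Lemma \ref{lem2.1}) reduces the inequality in dimension $n$ to the same inequality in dimension $n-1$ applied to the critical values of $P$.

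\textbf{Step 2: From the basic inequality to the chain comparison.} Given Step 1, the quotients $Q_j := H_j/H_{j-1}$ are well-defined and positive on $\Gamma_k$ for $1 \le j \le k$, and Newton's inequality yields the monotonicity $Q_1 \ge Q_2 \ge \cdots \ge Q_k$. Since
\[
\left(\frac{H_k(\kappa)}{H_l(\kappa)}\right)^{\frac{1}{k-l}} \;=\; \Bigl(\prod_{j=l+1}^{k} Q_j \Bigr)^{\frac{1}{k-l}}
\]
is the geometric mean of $\{Q_{l+1},\ldots,Q_k\}$, and the hypotheses $k \ge r$ and $l \ge s$ guarantee that the index window $[l+1,k]$ lies term-by-term to the right of $[s+1,r]$, the claim reduces to the elementary fact that the geometric mean of a decreasing sequence over a right-shifted window is smaller. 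Equality throughout forces equality in every Newton step, which gives $\kappa_1=\cdots=\kappa_n$; positivity of $H_1$ on $\Gamma_k$ then makes the common value positive.

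\textbf{Step 3: Concavity of $H_k^{1/k}$.} Write $f(\kappa) := \sigma_k^{1/k}(\kappa)$ and compute directly
\[
\p_i \p_j f = \tfrac{1}{k}\sigma_k^{\frac{1}{k}-2}\Bigl[\sigma_k\, \p_i \p_j \sigma_k \;-\; \tfrac{k-1}{k}\, \p_i \sigma_k \, \p_j \sigma_k\Bigr].
\]
Non-positivity of this Hessian, contracted with an arbitrary $\xi \in \R^n$, reduces to
\[
k\,\sigma_k(\kappa) \sum_{i,j}\sigma_k^{ij}(\kappa)\,\xi_i \xi_j \;\le\; (k-1)\Bigl(\sum_i \sigma_k^i(\kappa)\,\xi_i\Bigr)^{\!2},
\]
which is exactly Newton's inequality applied to the one-parameter hyperbolic polynomial $p(t):=\sigma_k(\kappa + t\xi)$: its coefficients at $t=0$ are precisely $\sigma_k$, $\sum_i \sigma_k^i \xi_i$, and $\tfrac12 \sum_{i,j}\sigma_k^{ij}\xi_i\xi_j$ (up to combinatorial constants), so the polarized version of Step 1 delivers the estimate.

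The main obstacle is Step 3: the algebraic identity for the Hessian is routine, but to conclude non-positivity one must rigorously justify that $p(t)$ remains in an appropriate cone along the line $\kappa+t\xi$, or else phrase the argument through Gårding's theory of hyperbolic polynomials. If the direct polarization becomes unwieldy, I would fall back on Gårding's classical result that the $k$-th root of a hyperbolic polynomial is concave on the component of hyperbolicity, which applies to $\sigma_k$ on $\Gamma_k$.
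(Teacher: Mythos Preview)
The paper does not actually prove Lemma \ref{lem2.2}; it states the result as well-known and cites \cite[Chapter XV, Section 4]{Lie} for a detailed proof. Your sketch is essentially the standard textbook argument that one finds in such references: Newton's inequality $H_{k-1}H_{k+1}\le H_k^2$ gives midpoint log-concavity of $j\mapsto \log H_j$, from which the chain comparison follows, and G{\aa}rding's hyperbolicity (or the polarized Newton inequality you describe) yields the concavity of $H_k^{1/k}$.

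One minor imprecision worth tightening in Step~2: the phrase ``the index window $[l+1,k]$ lies term-by-term to the right of $[s+1,r]$'' is ambiguous when the two windows have different lengths. The clean way to phrase it is that for a concave sequence $j\mapsto \log H_j$ the secant slope
\[
\frac{\log H_b-\log H_a}{b-a}
\]
is separately non-increasing in each endpoint $a$ and $b$; the hypotheses $l\ge s$ and $k\ge r$ then give exactly the desired comparison. With that adjustment the outline is correct.
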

	We also view $\sigma_{k}$ as a function defined on the symmetric matrice $W:=(W^i_j)$, we  denote $\sigma _k (W \left|
	i \right.)$ the symmetric function with $W$ deleting the $i$-row and
	$i$-column and $\sigma _k (W \left| ij \right.)$ the symmetric
	function with $W$ deleting the $i,j$-rows and $i,j$-columns. 
	\begin{lem}\label{lem2.3}
		Suppose $W=(W^i_j)$ is diagonal, for $1\leq k\leq n$, there holds
		\begin{align*}
			\frac{{\partial \sigma _k (W)}} {{\partial W_i^j }} = \begin{cases}
				\sigma _{k - 1} (W\left| i \right.), ~&\text{ if } i = j, \\
				0, ~&\text{ if } i \ne j.
			\end{cases}
		\end{align*}
	\end{lem}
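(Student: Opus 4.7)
The plan is to extend $\sigma_k$ from a symmetric function of eigenvalues to a polynomial in the entries of $W$ via the principal-minor expansion
\[
\sigma_k(W) = \sum_{|I|=k} \det(W_{II}),
\]
where $W_{II}$ denotes the principal $k\times k$ submatrix of $W$ indexed by $I\subset\{1,\dots,n\}$. For diagonalizable (in particular diagonal) $W$ this agrees with $\sigma_k$ applied to the eigenvalues of $W$, and since both sides are polynomials in the matrix entries, the identity extends to every symmetric matrix by continuity.

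With this polynomial representation in hand, I would differentiate term by term. A given $\det(W_{II})$ depends on the entry $W^j_i$ only when both $i,j\in I$, and in that case cofactor expansion identifies the derivative with the signed minor obtained by deleting row $i$ and column $j$ from $W_{II}$. Evaluating at a diagonal $W$: if $i\neq j$, the resulting minor is a submatrix of a diagonal matrix in which the surviving row and column labels no longer match, so it has an identically zero row (or column) and therefore vanishes; if $i=j\in I$, the minor is itself diagonal and equals $\prod_{\ell\in I\setminus\{i\}}\kappa_\ell$. Summing over all $I$ with $|I|=k$ and $i\in I$ then yields
\[
\frac{\partial \sigma_k(W)}{\partial W^i_i}\bigg|_{W\ \mathrm{diag}} \;=\; \sum_{\substack{I\ni i\\ |I|=k}}\prod_{\ell\in I\setminus\{i\}}\kappa_\ell \;=\; \sigma_{k-1}(\kappa\,|\,i) \;=\; \sigma_{k-1}(W\,|\,i),
\]
which is exactly the stated formula.

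This is a classical identity, so I do not expect any serious obstacle. The only delicate point is the clean justification that the off-diagonal cofactors vanish in the diagonal case, and this is a one-line combinatorial observation once the computation is framed through principal minors. An equally workable alternative is to start from $\det(I+tW) = \sum_{k=0}^n \sigma_k(W)\,t^k$ and invoke Jacobi's formula $\partial \det(M)/\partial M^j_i = \operatorname{cof}^i_j(M)$; specializing to diagonal $W$ produces the same conclusion more abstractly.
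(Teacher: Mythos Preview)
Your argument is correct: the principal-minor expansion $\sigma_k(W)=\sum_{|I|=k}\det(W_{II})$ is the standard polynomial extension of $\sigma_k$ to symmetric matrices, and your cofactor computation at a diagonal point is clean and accurate (in particular, the row indexed by $j$ in the minor with rows $I\setminus\{i\}$ and columns $I\setminus\{j\}$ consists entirely of off-diagonal entries of $W$, hence vanishes). The paper itself does not prove this lemma at all; it merely records it as well known and refers the reader to \cite[Chapter~XV, Section~4]{Lie}, so there is no in-paper argument to compare against, and your self-contained proof is more than adequate.
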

Now we denote $F:= H_{k}^{\frac 1 k}$ and $F^{ij}:=\frac{\partial F}{\partial h_{i}^j}$.	
	The following two Lemmas about $F=H_k^{\frac 1 k}$ (see  Andrews-McCoy-Zheng \cite[Lemma 5 and Lemma 8]{Ben} resp.) are essential for us in this paper, which will be used to show that the uniform bounds of $F$ and the strict convexity preserving along our flow \eqref{flow with capillary} resp. 
	\begin{lem}[\cite{Ben}]\label{lem2.4}
	 If $\kappa\in \Gamma_{n}$, there holds
		\begin{eqnarray}\label{ineq1}
		\sum_i	F^{ii}\kappa_{i}^{2}\geq F^{2}.
		\end{eqnarray}
	\end{lem}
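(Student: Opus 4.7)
The plan is to reduce the inequality to a direct application of the Newton--Maclaurin inequalities already recorded in Lemma \ref{lem2.2}. Since the Weingarten matrix is symmetric, I would work at a point where $(h_i^j)=\mathrm{diag}(\kappa_1,\ldots,\kappa_n)$, so by Lemma \ref{lem2.3} the partial derivatives of $F=H_k^{1/k}$ are
\[
F^{ii}=\frac{1}{k}H_k^{\frac1k-1}\binom{n}{k}^{-1}\sigma_{k-1}(\kappa|i),
\]
and all off-diagonal entries of $F^{ij}$ vanish.

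Next I would apply Lemma \ref{lem2.1}(4) to collapse the sum, namely
\[
\sum_{i}\kappa_i^{2}\sigma_{k-1}(\kappa|i)=\sigma_1(\kappa)\sigma_k(\kappa)-(k+1)\sigma_{k+1}(\kappa),
\]
and convert to normalized curvatures via $\sigma_j=\binom{n}{j}H_j$. Using the identity $(k+1)\binom{n}{k+1}/\binom{n}{k}=n-k$, this yields
\[
\sum_{i}F^{ii}\kappa_i^{2}=\frac{1}{k}H_k^{\frac1k-1}\Bigl[nH_1H_k-(n-k)H_{k+1}\Bigr].
\]
The target inequality $\sum_i F^{ii}\kappa_i^2\ge F^2=H_k^{2/k}$ is therefore equivalent, after multiplying by $kH_k^{1-1/k}>0$, to
\[
nH_1H_k-(n-k)H_{k+1}\ \geq\ kH_k^{1+\frac{1}{k}}.
\]

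This last bound follows by applying Lemma \ref{lem2.2} twice on $\Gamma_n\subset\Gamma_{k+1}$: the comparison $H_1\ge H_k^{1/k}$ gives $nH_1H_k\ge nH_k^{1+1/k}$, while $H_{k+1}^{1/(k+1)}\le H_k^{1/k}$ gives $(n-k)H_{k+1}\le (n-k)H_k^{1+1/k}$; subtracting produces exactly the desired $kH_k^{1+1/k}$. Since the argument only combines standard symmetric-function identities with Newton--Maclaurin, there is no real obstacle; the only subtlety worth double-checking is the binomial-coefficient bookkeeping that converts Lemma \ref{lem2.1}(4) into the normalized form with the coefficient $(n-k)$ appearing in front of $H_{k+1}$. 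The equality case $\kappa_1=\cdots=\kappa_n$ comes for free from the equality cases in Lemma \ref{lem2.2}.
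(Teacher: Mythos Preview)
Your argument is correct. The bookkeeping with the binomial coefficients is fine, and the case $k=n$ is covered since the $(n-k)H_{k+1}$ term vanishes, reducing the claim to $H_1\ge H_n^{1/n}$.

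However, the paper does not give its own proof of this lemma: it simply quotes the result from Andrews--McCoy--Zheng \cite{Ben}, and remarks that the inequality holds for the entire class of \emph{inverse-concave} curvature functions $F$ on $\Gamma_n$. The argument behind that citation is different from yours and worth knowing. One passes to the dual function $F_*(\tau):=1/F(\kappa)$ with $\tau_i=\kappa_i^{-1}$; inverse-concavity of $F$ means $F_*$ is concave, and a direct chain-rule computation gives $\partial F_*/\partial\tau_i = F^{ii}\kappa_i^2/F^2$. Concavity plus degree-one homogeneity of $F_*$ then yield $\sum_i \partial F_*/\partial\tau_i \ge F_*(1,\dots,1)=1$, which is exactly $\sum_i F^{ii}\kappa_i^2\ge F^2$. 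So the cited proof is a one-line structural argument that works for any inverse-concave $F$, whereas your route is a hands-on computation tailored to $F=H_k^{1/k}$ via Lemma~\ref{lem2.1}(4) and two applications of Newton--MacLaurin. Your approach is more elementary and self-contained within the tools already stated in Section~\ref{sec2.1}; the paper's cited approach is more conceptual and explains the remark after Lemma~\ref{lem2.5} about the result extending to general inverse-concave functions.
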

	
	\begin{lem}[\cite{Ben}]\label{lem2.5}
		If $\k\in \Gamma_n$, 
		then for  any $1\leq i\leq n$,  there holds
		\begin{eqnarray}\label{bdry ineq}
			(\k_1-\k_i)\left(\frac{F^{ii}}{\k_1^2}- \frac{F^{11}}{\k_i^2}\right)\leq 0.
		\end{eqnarray}
	\end{lem}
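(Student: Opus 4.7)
The plan is to reduce the claim to a one-line algebraic identity. Since $\kappa\in\Gamma_n$ forces every $\kappa_j>0$ (all $\s_j(\kappa)>0$ pins $\kappa$ in the positive cone), the inequality is equivalent, after multiplying through by the positive quantity $\kappa_1^2\kappa_i^2$, to
\[
(\kappa_1-\kappa_i)\bigl(F^{ii}\kappa_i^2 - F^{11}\kappa_1^2\bigr)\leq 0.
\]
From Lemma \ref{lem2.3}, on a diagonal matrix one has $F^{jj}=\tfrac{1}{k}H_k^{\frac{1}{k}-1}\binom{n}{k}^{-1}\s_{k-1}(\kappa|j)$, so up to the strictly positive common factor $c(\kappa):=\tfrac{1}{k}H_k^{\frac{1}{k}-1}\binom{n}{k}^{-1}$ it suffices to show
\[
(\kappa_1-\kappa_i)\bigl(\s_{k-1}(\kappa|i)\kappa_i^2 - \s_{k-1}(\kappa|1)\kappa_1^2\bigr)\leq 0.
\]

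For the factorisation I will use Lemma \ref{lem2.1}(1) on each term, peeling off the other deleted index:
\[
\s_{k-1}(\kappa|i)=\kappa_1\s_{k-2}(\kappa|1i)+\s_{k-1}(\kappa|1i),\qquad \s_{k-1}(\kappa|1)=\kappa_i\s_{k-2}(\kappa|1i)+\s_{k-1}(\kappa|1i).
\]
Substituting and collecting a factor of $(\kappa_1-\kappa_i)$ out of the difference yields
\[
\s_{k-1}(\kappa|i)\kappa_i^2 - \s_{k-1}(\kappa|1)\kappa_1^2 = -(\kappa_1-\kappa_i)\bigl[\kappa_1\kappa_i\,\s_{k-2}(\kappa|1i)+(\kappa_1+\kappa_i)\s_{k-1}(\kappa|1i)\bigr],
\]
with the convention $\s_{-1}\equiv 0$ covering the degenerate case $k=1$. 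Multiplying by $(\kappa_1-\kappa_i)$ the quantity in question becomes
\[
-c(\kappa)\,(\kappa_1-\kappa_i)^2\bigl[\kappa_1\kappa_i\,\s_{k-2}(\kappa|1i)+(\kappa_1+\kappa_i)\s_{k-1}(\kappa|1i)\bigr],
\]
which is manifestly nonpositive, since every factor in the bracket is nonnegative on the positive cone.

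Because the argument is essentially a single algebraic identity once Lemma \ref{lem2.3} is invoked, I do not foresee any genuine obstacle. The only caveat worth flagging is the strength of the hypothesis: strict positivity of each $\kappa_j$ is used both to clear the denominators $\kappa_1^2\kappa_i^2$ and to force the bracket $\kappa_1\kappa_i\s_{k-2}(\kappa|1i)+(\kappa_1+\kappa_i)\s_{k-1}(\kappa|1i)$ to be nonnegative, so the assumption $\kappa\in\Gamma_n$ cannot be weakened to $\Gamma_k$ along this route.
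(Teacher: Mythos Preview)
Your argument is correct. The factorisation
\[
\s_{k-1}(\kappa|i)\kappa_i^2 - \s_{k-1}(\kappa|1)\kappa_1^2 = -(\kappa_1-\kappa_i)\bigl[\kappa_1\kappa_i\,\s_{k-2}(\kappa|1i)+(\kappa_1+\kappa_i)\s_{k-1}(\kappa|1i)\bigr]
\]
is exactly right, and on $\Gamma_n$ every term in the bracket is nonnegative, so the conclusion follows.

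As for the comparison: the paper does not prove this lemma at all; it simply cites Andrews--McCoy--Zheng \cite{Ben}, where the inequality is established for any \emph{inverse concave} curvature function $F$ on $\Gamma_n$. Their argument proceeds by passing to the dual function $F_*(\kappa)=F(\kappa_1^{-1},\dots,\kappa_n^{-1})^{-1}$, whose concavity encodes precisely the inequality \eqref{bdry ineq}, and then checking that $H_k^{1/k}$ is inverse concave. Your route is different and more elementary: you stay with the explicit formula $F^{jj}=c(\kappa)\s_{k-1}(\kappa|j)$ and reduce everything to a bare-hands symmetric-function identity. The trade-off is scope versus simplicity: the cited approach covers every inverse concave $F$ at once (which is what the paper's remark after Lemma~\ref{lem2.5} alludes to), while your computation is self-contained and needs nothing beyond Lemma~\ref{lem2.1}, but is tailored to $H_k^{1/k}$. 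Your closing caveat about $\Gamma_n$ versus $\Gamma_k$ is also on point: positivity of all $\kappa_j$ is genuinely used.
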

We remark that the properties \eqref{ineq1} and \eqref{bdry ineq}  hold for a large class of curvature function $F$, say for instance all the inverse concave functions defined in $\Gamma_{n}$ (cf. Andrews-McCoy-Zheng \cite{Ben}).

	\subsection{Capillary hypersurface}\
	
	Let  $\S\subset\ov{{\RR}^{n+1}_+}$ be a smooth, properly embedded capillary hypersurface, given by the embedding $X: M \to \ov{\RR^{n+1}_+}$, where $M$ is a compact, orientable smooth manifold of dimension $n$ with non-empty boundary. 
	If there is no confusion, we do not distinguish $\Sigma$ 
	and the embedding $X$.
	Let $\mu$ be the unit outward co-normal of $\p\S$ in $\S$ and  $\overline{\nu}$ be the unit normal to $\partial\Sigma$ in $\partial\mathbb{R}^{n+1}_+$ such that $\{\nu,\mu\}$ and $\{\overline{\nu},e\}$ have the same orientation in normal bundle of $\partial\Sigma\subset \ov{\mathbb{R}^{n+1}_+}$. The 
 contact angle $\theta\in(0,\pi)$ between  the hypersurface $\Sigma$ and the support $\partial\R^{n+1}_+$ is defined as
	$$\< \nu, e\>=\cos (\pi-\theta),~~~~\p \S.$$
Along $\p\S$,  
	\begin{eqnarray*}\label{co-normal bundle}
		\begin{array} {rcl}
			e &=&\sin\theta \mu-\cos\theta \nu,
			\\
			\overline{\nu} &=&\cos\theta \mu+\sin\theta \nu.
		\end{array}
	\end{eqnarray*}
	
	We use $D$ to denote the Levi-Civita connection of $\ov{\RR^{n+1}_+}$ w.r.t the Euclidean metric $\delta$, 
	and $\n$  the Levi-Civita connection on $\S$ w.r.t the induced metric $g$ from the immersion $X$. 
	The operators $\div, \Delta$, and $\n^2$ are the divergence, Laplacian, and Hessian operators on $\S$ respectively.
	The second fundamental form $h$ of $X$  is defined by
	$$D_{U}V=\n_{U}V- h(U,V)\nu,$$ for any $U,V\in T\S$.
	Denote $\k:=(\k_1, \k_2,\cdots, \k_n)\in \R^n$ be the set of principal curvatures, i.e., the eigenvalues of Weingarten curvature tensor $h:=(h^i_j)$.  
We use semi-common $f_{;i}:=\nabla_{e_{i}}f, f_{;ij}:=\nabla^{2}f(e_{i},e_{j})$, etc., to indicate the covariant derivative with respect to   $g$ and an orthonormal frame $\{e_i\}_{i=1}^n$ over $\S$, and we use the Einstein summation convention 
	that the repeated indices are implicitly summed over, no matter the indices are upper or lower. 

\subsection{Evolution equations}\label{sec2.3}\
	
	In this subsection, we derive the evolution equations for some geometric quantities along flow \eqref{flow with capillary}. 	Let $\Sigma_t$ be a family of smooth, embedding hypersurfaces with capillary boundary in $\ov{ \mathbb{R}^{n+1}_+}$, given by embeddings $x(\cdot,t):M \to \ov{\mathbb{R}^{n+1}_+}$, which evolves by flow
	\begin{eqnarray}\label{flow with normal and tangential}
		\p_t x=f\nu+\T,
	\end{eqnarray} where  $\T\in T\Sigma_t$ is some tangential vector field.  For our flow \eqref{flow with capillary}, we have  $$f=1+\cos\theta \<\nu,e\>-\<x,\nu\> F,$$ and the choice of $\T$ satisfies that the restriction of $x(\cdot, t)$ on $\p M$ is contained in $\p\R^{n+1}_+$, more precisely, $
	\T |_{\p \S_t}=f\cot\theta \mu$, see  \cite[Section 2]{WWX1} for more discussion. Along flow \eqref{flow with normal and tangential}, we have the following evolution equations for the induced metric $g_{ij}$, the
	unit outward normal $\nu$, the second fundamental form $(h_{ij})$, the Weingarten
	tensor  $(h^i_j)$, the $k$-th mean curvature $\sigma_k$ and  $F:=F(h^j_i)$ of the hypersurfaces $\Sigma_t$. (cf.   \cite[Proposition 2.11]{WeX21} for a proof). 
	
	\begin{prop}[\cite{WeX21}]\label{basic evolution eqs}
		Along  flow \eqref{flow with normal and tangential}, there hold
		\begin{enumerate} 
			\item $\p_t g_{ij}=2fh_{ij}+\n_i \T_j+\n_j\T_i$.
		%	\item $\p_tdA_t =\left(fH+\div(T)\right)dA_t.$ 
			\item $\p_t\nu =-\n f+h(e_i,\T)e_i$.	
			\item $\p_t h_{ij}=-\n^2_{ij}f +f(h^2)_{ij} +\n_\T h_{ij}+h_{j}^k\n_i\T_k+h_{i}^k\n_j \T_k,$ where $(h^2)_{ij}=h_{ik}h_{j}^k$.
			\item $\p_t h^i_j=-\n^i\n_{j}f -fh_{j}^kh^{i}_k+\n_\T h^i_j.$
\item $\p_t \sigma_k=- \frac{\p \s_k}{\p h_i^j}\n^i\n_{j}f - f (\s_1\s_k-(k+1)\s_{k+1})+\langle \n \s_k, \T\rangle $.
			\item  $\partial_t F= -F^{j}_i\n^i\n_{j}f -fF^{j}_ih_{j}^kh^{i}_k+\langle\n F,\T\rangle $,  where $F^i_j:=\frac{\partial F}{\p h^j_i}$.
			
		\end{enumerate}  
	\end{prop}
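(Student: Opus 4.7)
The plan is to derive each identity by differentiating its defining relation in $t$, substituting the flow $\p_t x = f\nu + \T$, and decomposing into tangential and normal components via the Gauss--Weingarten relations $D_{e_i} e_j = \n_{e_i} e_j - h_{ij}\nu$ and $D_{e_i}\nu = h_i^k e_k$. The key bookkeeping device throughout is the identity $\p_i \T = \n_{e_i}\T - h(e_i,\T)\nu$, which converts Euclidean derivatives of the tangential correction $\T$ into covariant ones plus a normal piece; this is precisely what produces the intrinsic $\n_\T$ and $\n\T$ expressions appearing in (1)--(6).

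For (1) I would differentiate $g_{ij} = \langle \p_i x, \p_j x\rangle$, commute $\p_t$ with $\p_i$, and pair $\p_i(f\nu + \T)$ with $\p_j x$; only the tangential part of $\p_i(f\nu)$, namely $f h_i^k e_k$, survives the inner product with $e_j$, producing $fh_{ij}$, and the symmetric counterpart yields the $\n_i\T_j + \n_j\T_i$ term. For (2), $|\nu|^2 = 1$ forces $\p_t\nu$ to be tangential, while differentiating $\langle \nu, e_i\rangle = 0$ in $t$ and substituting $\p_t e_i = \p_i \p_t x$ isolates $\langle \p_t\nu, e_i\rangle = -f_{;i} + h(e_i,\T)$.

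The main obstacle is (3), where one must correctly extract the Hessian $\n^2 f$ from ambient second derivatives. Starting from $h_{ij} = -\langle D_{e_i} e_j, \nu\rangle$, I would differentiate in $t$ and commute $\p_t, \p_i, \p_j$; the normal derivatives of $f\nu$ contribute $-f_{;ij}$ together with a term $f(h^2)_{ij}$ coming from $\p_i \p_j \nu$ via $\p_j \nu = h_j^k e_k$. The $\T$-dependent pieces are reorganized using the Codazzi identity $\n_i h_{jk} = \n_k h_{ij}$ into $\n_\T h_{ij} + h_j^k \n_i\T_k + h_i^k \n_j\T_k$. Then (4) follows by differentiating $h^i_j = g^{ik}h_{kj}$ and combining with (1) and (3): the contribution $-g^{ik}(\p_t g_{k\ell})h^\ell_j$ produces $-2fh_j^k h_k^i$ terms, one of which cancels against $+f(h^2)_{ij}$ after raising an index, leaving the clean expression stated in (4).

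Finally, (5) and (6) reduce to chain rule, $\p_t \s_k = \frac{\p \s_k}{\p h^i_j}\p_t h^j_i$ and analogously for $F$, using Lemma~\ref{lem2.3}. Substituting (4), the term $\frac{\p \s_k}{\p h^i_j} h^j_\ell h^\ell_i$ collapses via Lemma~\ref{lem2.1}(4) into $\s_1\s_k - (k+1)\s_{k+1}$, and the tangential contribution $\frac{\p \s_k}{\p h^i_j}\n_\T h^j_i$ is, by the chain rule again, exactly $\langle \n \s_k, \T\rangle$. The two boundary-like terms $h^k_j\n_i\T_k$ and $h^k_i\n_j\T_k$ contract symmetrically and vanish in the resulting scalar expression; the identical argument applies to $F$, completing the six evolution equations.
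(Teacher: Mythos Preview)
The paper does not give its own proof of this proposition; it simply cites \cite[Proposition~2.11]{WeX21}. Your approach is the standard one and matches what that reference does: differentiate the defining relations, use Gauss--Weingarten, and push the tangential correction $\T$ through via $D_{e_i}\T=\n_{e_i}\T-h(e_i,\T)\nu$. Items (1), (2), (3), (5), (6) are handled correctly.

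There is, however, a genuine bookkeeping gap in your passage from (3) to (4). When you differentiate $h^i_j=g^{ik}h_{kj}$ and feed in (1) and (3), the four $\nabla\T$--terms do \emph{not} all cancel. Two of them do (the pair $-h_{kj}\nabla^i\T^k$ from $\p_t g^{ik}$ against $+h^l_j\nabla^i\T_l$ from $\p_t h_{kj}$), but the remaining pair
\[
-\,h^m_j\nabla_m\T^i\;+\;h^i_l\,\nabla_j\T^l
\]
survives. So the ``clean expression'' in (4) is only literally correct if $\n_\T h^i_j$ is read as the Lie derivative $\mathcal L_\T h^i_j$; otherwise (4) carries these two extra terms. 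You seem to sense this, since in your last paragraph you speak of ``the two boundary-like terms $h^k_j\n_i\T_k$ and $h^k_i\n_j\T_k$'' as if they were still present when substituting (4) into (5)--(6), which contradicts your earlier claim that (4) is clean. The fix is easy and you essentially have it: because $\frac{\p\sigma_k}{\p h^j_i}$ (and $F^i_j$) commutes with $h$, the surviving pair contracts to zero against it, so (5) and (6) come out exactly as stated regardless. Just make your treatment of (4) consistent with that.
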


For simplicity, we introduce the linearized parabolic operator with respect to flow \eqref{flow with capillary},
\begin{eqnarray*}
	\mathcal{L}:=\partial_{t}-\<x, \nu\>F^{ij}\nabla^{2}_{ij} -\<\T+Fx-\cos\theta e, \nabla\>.
\end{eqnarray*}
%where $F^{ij}:=\frac{\partial F}{\partial h_{i}^{j}}$ and $F:=H_k^{\frac 1 k}$. 	
	\begin{prop}
		Along flow \eqref{flow with capillary},   the support function $u:=\<x, \nu\>$ satisfies
		\begin{eqnarray}\label{evo of u}
			\mathcal{L}u= 1+\cos\theta\<\nu, e\>-2uF +u^{2}F^{ij}(h^{2})_{ij},
		\end{eqnarray}
		and 
		\begin{eqnarray}\label{deri of u}
			\nabla_{\mu}u=\cot\theta h(\mu, \mu)u, \quad {\rm on}~\partial\Sigma_{t}.
		\end{eqnarray}
	\end{prop}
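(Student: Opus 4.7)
The strategy is to compute each of the three pieces of $\mathcal{L}u$ separately via the Gauss--Weingarten formulas and Proposition \ref{basic evolution eqs}, then observe that the cross terms involving $\langle\nabla F,x\rangle$, $\langle \T,\nabla u\rangle$ and $F\langle\nabla u,x\rangle$ all cancel, leaving precisely the right-hand side of \eqref{evo of u}. For the boundary identity \eqref{deri of u}, the plan is to use two standard consequences of the capillary condition: that $\mu$ is a principal direction of $\Sigma_t$ at $\partial\Sigma_t$, and that $\langle x,\mu\rangle=\cot\theta\,u$ on $\partial\Sigma_t$.

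The key spatial identities are $\nabla_i u = h_i^{k}\langle x,e_k\rangle$ and, using Codazzi together with the Gauss formula $D_U V=\nabla_U V-h(U,V)\nu$,
\begin{eqnarray*}
\nabla^2_{ij} u \;=\; h_{ij} \;-\; u(h^2)_{ij} \;+\; \nabla^k h_{ij}\,\langle x,e_k\rangle .
\end{eqnarray*}
Contracting with $F^{ij}$ and using that $F=H_k^{1/k}$ is $1$-homogeneous (so $F^{ij}h_{ij}=F$ and $F^{ij}\nabla^k h_{ij}=\nabla^k F$) one obtains
\begin{eqnarray*}
\langle x,\nu\rangle F^{ij}\nabla^2_{ij} u = uF - u^2 F^{ij}(h^2)_{ij} + u\langle \nabla F, x\rangle .
\end{eqnarray*}
For the time derivative, from $\partial_t x=f\nu+\T$ with $f=1+\cos\theta\langle\nu,e\rangle-uF$ and the formula $\partial_t\nu=-\nabla f+h(e_i,\T)e_i$ of Proposition \ref{basic evolution eqs}(2), a direct calculation gives $\partial_t u = f-\langle x,\nabla f\rangle+\langle \T,\nabla u\rangle$ after recognizing $h(e_i,\T)\langle x,e_i\rangle=\langle \T,\nabla u\rangle$ through the identity $\nabla_i u=h_{ij}\langle x,e_j\rangle$.

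Expanding $\nabla f = \cos\theta\,\nabla\langle \nu,e\rangle - F\nabla u - u\nabla F$ and assembling, the contribution $\langle \T,\nabla u\rangle$ is killed by the corresponding term in the drift $\langle \T+Fx-\cos\theta\,e,\nabla u\rangle$; the $u\langle\nabla F,x\rangle$ from the Laplacian piece is killed by the analogous term coming from $\langle x,\nabla f\rangle$; and $F\langle x,\nabla u\rangle$ cancels against $F\langle x,\nabla u\rangle$ from the drift. What remains is
\begin{eqnarray*}
\mathcal{L}u = 1+\cos\theta\langle \nu,e\rangle -2uF + u^2 F^{ij}(h^2)_{ij} + \cos\theta\bigl(\langle e,\nabla u\rangle - \langle x,\nabla\langle \nu,e\rangle\rangle\bigr),
\end{eqnarray*}
and the last parenthesis vanishes because both expressions equal $h_{ij}\langle e,e_i\rangle\langle x,e_j\rangle$ by symmetry of the second fundamental form. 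This yields \eqref{evo of u}.

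For the boundary identity, differentiating $\langle \nu,e\rangle=-\cos\theta$ along any $Y\in T(\partial\Sigma_t)$ gives $h(Y,e^T)=0$; since $e^T=\sin\theta\,\mu$ with $\sin\theta\neq 0$, one obtains $h(\mu,Y)=0$ for all such $Y$, so $\mu$ is a principal direction. Moreover $\langle x,e\rangle=0$ on $\partial\Sigma_t$ together with $e=\sin\theta\,\mu-\cos\theta\,\nu$ yields $\langle x,\mu\rangle=\cot\theta\,u$. Choosing an orthonormal frame with $e_1=\mu$ and $e_2,\dots,e_n\in T(\partial\Sigma_t)$, the formula $\nabla_i u=h_{ij}\langle x,e_j\rangle$ collapses to $\nabla_\mu u = h_{11}\langle x,\mu\rangle = \cot\theta\, h(\mu,\mu)\,u$, giving \eqref{deri of u}. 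The main obstacle is the careful bookkeeping of the various cancellations in the interior calculation; all other steps are short computations using only Proposition \ref{basic evolution eqs}, the Codazzi equations, and the $1$-homogeneity of $F$.
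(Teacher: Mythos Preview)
Your proof is correct and follows essentially the same approach as the paper: the same spatial identities $u_{;i}=h_{ik}\langle x,e_k\rangle$ and $u_{;ij}=h_{ij}+h_{ij;k}\langle x,e_k\rangle-u(h^2)_{ij}$, the same contraction using $1$-homogeneity of $F$, and the same expansion of $\partial_t u$ via Proposition~\ref{basic evolution eqs}(2); the cancellation you isolate, $\langle e,\nabla u\rangle=\langle x,\nabla\langle\nu,e\rangle\rangle$, is precisely what the paper uses implicitly when it passes from $-\cos\theta\langle x,\nabla\langle\nu,e\rangle\rangle$ to $-\langle\cos\theta\, e,\nabla u\rangle$. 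For the boundary identity the paper simply cites \cite[Proposition~3.1]{MWW}, whereas you supply the direct argument (that $\mu$ is principal and $\langle x,\mu\rangle=\cot\theta\,u$); this is a minor presentational difference, not a different method.
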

	\begin{proof}	For equation \eqref{deri of u}, one can refer to \cite[Proposition 3.1]{MWW}. We only need to show \eqref{evo of u} in the following. 	Direct computation yields
		\begin{eqnarray*}
			u_{;i}=h_{ik}\<x, e_{k}\>,
		\end{eqnarray*}
		and
		\begin{eqnarray*}
			u_{;ij}= h_{ij;k}\<x, e_{k}\>+h_{ij}-u(h^{2})_{ij},
		\end{eqnarray*}
		then  
		\begin{eqnarray}\label{Fij u_ij}
			F^{ij}u_{;ij}=\<x, \nabla F\>+F-uF^{ij}(h^{2})_{ij}.
		\end{eqnarray}
		Combining with Proposition \ref{basic evolution eqs} (2), we see
		\begin{eqnarray*}
			\partial_{t}u&=&\<\p_{t}x, \nu\>+\<x, \partial_{t}\nu\>=f-\<x, \nabla f\>+h(x^{T}, \T)\\
			&=&\left(1+\cos\theta\<\nu, e\>-uF\right)-\cos\theta \left<x, \nabla \<\nu, e\>\right>+u\<x, \nabla F\>\\
			&&+F\<x, \nabla u\>+h(x^{T}, \T)\\
			&=&\<x, \nu\>F^{ij}u_{; ij}-uF+u^{2}F^{ij}(h^{2})_{ij} + \left(1+\cos\theta\<\nu, e\>-uF\right)\\
			&& -\<\cos\theta e, \nabla u \>+F\<x, \nabla u\>+\<\T, \nabla u\>,
		\end{eqnarray*}where we have used \eqref{Fij u_ij} in the last line. Hence it follows 
		\begin{eqnarray*}
			\mathcal{L}u= 1+\cos\theta\<\nu, e\>-2uF+u^{2}F^{ij}(h^{2})_{ij}.
		\end{eqnarray*}
		
\end{proof}
	
Recall that in \cite{MWW} (see also \cite{WWX2}), we have  the \textit{capillary support function}  as
	\begin{eqnarray*}\label{relative support}
		\bar u:=\frac{\<x, \nu\>}{1+\cos\theta \<\nu,e\>}.
	\end{eqnarray*}

	\begin{prop}\label{evo of relative u}
		Along   flow \eqref{flow with capillary},    $\bar u$ satisfies
		\begin{eqnarray}\label{evo of bar u}
			\L \bar u=1-2\bar u F+\bar u^2 F^{ij}(h^{2})_{ij}+ 2u F^{ij}\bar{u}_{;i}(\cos\theta\<\nu, e\>)_{;j},
		\end{eqnarray}
		and  \begin{eqnarray}\label{neumann of bar u}
			\n_{\mu} \bar u=0, \quad \text{ on } \p\S_t.
		\end{eqnarray}  
	\end{prop}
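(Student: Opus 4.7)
The plan is to set $v := 1+\cos\theta\langle\nu,e\rangle$ so that $\bar u = u/v$, and then derive \eqref{evo of bar u} from \eqref{evo of u} by computing $\L v$ separately and applying the quotient rule to $\L$. Since $\L$ is a first and second order operator, expanding $u = \bar u\, v$ gives the identities
\begin{eqnarray*}
\partial_t u &=& v\,\partial_t \bar u + \bar u\,\partial_t v,\\
F^{ij}u_{;ij} &=& v\,F^{ij}\bar u_{;ij} + 2F^{ij}\bar u_{;i} v_{;j} + \bar u\,F^{ij}v_{;ij},
\end{eqnarray*}
so after dividing by $v$ we obtain $\L \bar u = \tfrac{1}{v}\L u - \tfrac{\bar u}{v}\L v - \tfrac{2u}{v} F^{ij}\bar u_{;i} v_{;j}/v\cdot v$. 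The last term rearranged yields precisely the mixed gradient term $2uF^{ij}\bar u_{;i}(\cos\theta\langle\nu,e\rangle)_{;j}/v$ that appears in \eqref{evo of bar u} after substituting $u = \bar u v$.

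The core calculation is therefore $\L v$. For the time derivative I would use Proposition \ref{basic evolution eqs}(2): $\partial_t\nu = -\nabla f + h(e_i,\T)e_i$, pair with $e$, and then use that the tangential projection of $e$ satisfies $\nabla_j\langle e,e_k\rangle = -\langle e,\nu\rangle h_{jk}$ (which comes from differentiating the orthogonal decomposition $e = e^T+\langle e,\nu\rangle\nu$ along $\Sigma_t$ and using $D_{e_j}e=0$). Combining this with Codazzi's identity $h_{ik;j}=h_{ij;k}$ yields
\begin{eqnarray*}
F^{ij}\nabla^2_{ij}\langle\nu,e\rangle = \langle\nabla F,e^T\rangle - \langle e,\nu\rangle F^{ij}(h^2)_{ij},
\end{eqnarray*}
so that $\L\langle\nu,e\rangle$ simplifies into terms involving $f$, $F$, the transport by $\T+Fx-\cos\theta\, e$, and $\langle e,\nu\rangle F^{ij}(h^2)_{ij}$. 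Plugging $f = 1+\cos\theta\langle\nu,e\rangle - uF = v - uF$ and collecting terms, together with \eqref{evo of u}, should produce \eqref{evo of bar u} after cancellation of the $\langle e,\nu\rangle F^{ij}(h^2)_{ij}$ contributions with the $u^2F^{ij}(h^2)_{ij}$ term from \eqref{evo of u}, leaving the clean coefficient $\bar u^2 F^{ij}(h^2)_{ij}$.

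For the Neumann condition \eqref{neumann of bar u}, the argument is purely boundary algebra. On $\partial\Sigma_t$ one has $v=\sin^2\theta$ and $e=\sin\theta\,\mu-\cos\theta\,\nu$, so $e^T=\sin\theta\,\mu$ along the boundary. Then $\nabla_\mu\langle\nu,e\rangle = h_{\mu k}\,e^T_k = \sin\theta\, h(\mu,\mu)$, and combined with \eqref{deri of u} the quotient rule gives
\begin{eqnarray*}
\nabla_\mu \bar u = \frac{\cot\theta\, h(\mu,\mu)u\cdot\sin^2\theta - u\cos\theta\cdot\sin\theta\, h(\mu,\mu)}{\sin^4\theta}=0.
\end{eqnarray*}

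I expect the main obstacle to be bookkeeping in the evolution computation: one has to track which gradient terms arise from $\L u$ (via \eqref{evo of u}), which from $\L v$, and which from the cross-term $2F^{ij}\bar u_{;i} v_{;j}$ in the quotient expansion, and verify that the first-order transport part of $\L$ (the field $\T+Fx-\cos\theta\,e$) produces no leftover drift on $\bar u$ beyond what \eqref{evo of bar u} records. The cancellation of the $\langle e,\nu\rangle F^{ij}(h^2)_{ij}$ contribution with part of the $u^2 F^{ij}(h^2)_{ij}$ from \eqref{evo of u}, delivering exactly $\bar u^2 F^{ij}(h^2)_{ij}$, is the key algebraic checkpoint I would verify carefully.
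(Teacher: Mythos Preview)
Your approach is essentially the paper's: compute $\L\langle\nu,e\rangle$ via Codazzi and Proposition~\ref{basic evolution eqs}(2) (the paper obtains $\L\langle\nu,e\rangle = u\langle\nu,e\rangle F^{ij}(h^2)_{ij}$ just as you describe), then combine with \eqref{evo of u} through the product/quotient rule; your explicit boundary computation for \eqref{neumann of bar u} is correct (the paper simply cites \cite{MWW}). One slip to fix: from $\L u = v\,\L\bar u + \bar u\,\L v - 2uF^{ij}\bar u_{;i}v_{;j}$ the cross term comes out with a \emph{plus} sign, i.e.\ $\L\bar u = \tfrac{1}{v}\L u - \tfrac{\bar u}{v}\L v + \tfrac{2u}{v}F^{ij}\bar u_{;i}v_{;j}$, not minus as you wrote.
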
	
	
	\begin{proof}Since \eqref{neumann of bar u} has been shown in   \cite[Proposition 3.3]{MWW}. We only derive \eqref{evo of bar u} in the following. 
		From the Codazzi formula, 	\begin{eqnarray}\label{eq_b}
			\langle \nu,e\rangle_{;ij}= h_{ij;k}\langle e_k,e\rangle -(h^2)_{ij}\langle \nu,e\rangle ,
		\end{eqnarray}
		it yields
		\begin{eqnarray*}
			F^{ij}\<\nu,e\>_{;ij}= \<\n F,e\>-\<\nu,e\>F^{ij}(h^{2})_{ij}.
		\end{eqnarray*}
		Combining  it with
		\begin{eqnarray*}
			\<\n f,e\>=\cos\theta h(e^T,e^T)-\<x,\nu\>\< \n F,e\>-Fh(x^T,e^T),
		\end{eqnarray*}
		we obtain
		\begin{eqnarray*}
			\p_t \<\nu,e\>&=& -\<\n f,e\>+h(e_i,\T)\<e_i,e\>
			\\&=&-\cos\theta h(e^T,e^T)+u\<\n F,e\>+Fh(x^T,e^T)+h(\T,e^T)
			\\&=& u\left( F^{ij}\<\nu,e\>_{;ij}+\<\nu,e\>F^{ij}(h^{2})_{ij}\right)-\cos\theta h(e^T,e^T) +Fh(x^T,e^T)+h(\T,e^T),
		\end{eqnarray*}
		that is,
		\begin{eqnarray}\label{ev of nu e}
			\L \<\nu,e\>= u \<\nu,e\>F^{ij}(h^{2})_{ij}.
		\end{eqnarray} 
	From \eqref{evo of u} and \eqref{ev of nu e}, it follows
		\begin{eqnarray*}
			\L \bar u&=& \frac{1}{1+\cos\theta \<\nu,e\>} \L u -\frac{\<x,\nu\>}{(1+\cos\theta\<\nu,e\>)^2} \L (1+\cos\theta \<\nu,e\>) \\
			&&-\<x,\nu\>F^{ij}\left(\frac{2u\cos^{2}\theta\<\nu_{;i}, e\>\<\nu_{;j}, e\>}{(1+\cos\theta\<\nu,e\>)^3 }-2\frac{ \<x,\nu\>_{;i} \cos\theta \<\nu_{;i},e\>}{(1+\cos\theta\<\nu,e\>)^2}\right)
			\\&=&1-\frac{2uF}{1+\cos\theta \<\nu,e\>}+\frac{u^2 F^{ij}(h^{2})_{ij}}{(1+\cos\theta\<\nu,e\>)^2}+ \frac{2u F^{ij}\bar{u}_{;i}(1+\cos\theta\<\nu, e\>)_{;j}}{1+\cos\theta\<\nu, e\>}.%\\&\geq & \left( \frac{u|H|}{\sqrt{n}(1+\cos\theta \<\nu,e\>)} -\sqrt{n}\right)^2,
		\end{eqnarray*}
	\end{proof}

	Now we compute the evolution equation for the curvature function $F=H_k^{\frac 1 k}$.
	\begin{prop}\label{evol of H}
		Along flow \eqref{flow with capillary}, we have
		\begin{eqnarray}\label{evo of H}
			\mathcal{L}F=2F^{ij}F_{;i}u_{;j}+F^{2}-F^{ij}(h^{2})_{ij},
		\end{eqnarray}
		and
		\begin{eqnarray}\label{deri of H}
			\nabla_{\mu}F=0,\quad{\rm on }~\partial\Sigma_{t}.
		\end{eqnarray}
	\end{prop}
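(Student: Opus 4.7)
My plan for the evolution equation \eqref{evo of H} is to apply Proposition \ref{basic evolution eqs}(6) to $f = 1 + \cos\theta\langle \nu, e\rangle - uF$, which gives
\[
\partial_t F = -F^{ij} f_{;ij} - f F^{ij}(h^2)_{ij} + \langle \nabla F, \mathcal{T}\rangle.
\]
Expanding the Hessian $f_{;ij} = \cos\theta\langle\nu,e\rangle_{;ij} - F u_{;ij} - u_{;i}F_{;j} - u_{;j}F_{;i} - u F_{;ij}$, the two crucial inputs are \eqref{Fij u_ij} and the analogue for $\langle \nu, e\rangle$ obtained from tracing the Codazzi identity \eqref{eq_b} with $F^{ij}$, namely $F^{ij}\langle \nu, e\rangle_{;ij} = \langle \nabla F, e\rangle - \langle \nu, e\rangle F^{ij}(h^2)_{ij}$. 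After substitution, the contributions $-\cos\theta\langle \nu, e\rangle F^{ij}(h^2)_{ij}$ and $uF\, F^{ij}(h^2)_{ij}$ combine with $-fF^{ij}(h^2)_{ij}$ so that only $-F^{ij}(h^2)_{ij}$ survives; the $F\cdot F$ piece becomes $F^{2}$; the mixed derivatives collect into $2F^{ij}u_{;i}F_{;j}$; and the first-order terms $\cos\theta\langle \nabla F, e\rangle$, $-F\langle x, \nabla F\rangle$, and $\langle \nabla F, \mathcal{T}\rangle$ assemble into $\langle \nabla F, \mathcal{T} + Fx - \cos\theta e\rangle$. Moving this and the surviving $uF^{ij}F_{;ij}$ to the left-hand side produces exactly $\mathcal{L}F = 2F^{ij}F_{;i}u_{;j} + F^{2} - F^{ij}(h^{2})_{ij}$.

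For the Neumann boundary condition \eqref{deri of H}, I would work at a boundary point in a frame with $e_{n} = \mu$ and $\{e_{\alpha}\}_{\alpha<n}$ tangent to $\partial\Sigma_{t}$. Differentiating the capillary condition $\langle \nu, e\rangle = -\cos\theta$ in a tangent direction $e_{\alpha}$ and using the decomposition $e = \sin\theta\,\mu - \cos\theta\,\nu$ immediately yields $h(\mu, e_{\alpha}) = 0$ for $\alpha < n$; hence the Weingarten tensor is block-diagonal at boundary points, and $\langle e_{i}, e\rangle = \sin\theta\,\delta_{in}$.

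To conclude $\nabla_{\mu} F = 0$, I would exploit the fact that the tangential field $\mathcal{T}|_{\partial M} = f\cot\theta\,\mu$ is designed precisely so that the capillary angle is preserved, i.e.\ $\partial_{t}\langle \nu, e\rangle = 0$ on $\partial M$. Combining this with Proposition \ref{basic evolution eqs}(2) rewrites as the pointwise boundary identity
\[
\sin\theta\,\nabla_{\mu} f = f\cos\theta\, h(\mu,\mu).
\]
Expanding $\nabla_{\mu} f$ via $\nabla_{\mu}\langle \nu, e\rangle = h(\mu,\mu)\sin\theta$ and \eqref{deri of u} (which gives $\nabla_{\mu} u = \cot\theta\, h(\mu,\mu)\, u$), and using $f = \sin^{2}\theta - uF$ on $\partial\Sigma_{t}$, the terms in $h(\mu,\mu)$ and in $h(\mu,\mu)\,uF$ cancel pairwise, leaving $u\sin\theta\,\nabla_{\mu} F = 0$, which yields \eqref{deri of H}.

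The main obstacle I expect lies in the bookkeeping in the first paragraph: several cancellations of $F^{ij}(h^{2})_{ij}$-type terms must align precisely, and one must be careful with the sign convention $D_{U}V = \nabla_{U}V - h(U,V)\nu$ used here (so that $D_{i}\nu = h_{ij}e_{j}$). The boundary step, by contrast, reduces to a short algebraic identity once the block-diagonal structure at $\partial\Sigma_{t}$, the angle-preservation of the flow, and \eqref{deri of u} are in place.
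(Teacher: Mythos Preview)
Your proposal is correct and follows essentially the same route as the paper: the interior computation is the same expansion of $f_{;ij}$ traced against $F^{ij}$ (you just package the $\langle\nu,e\rangle$ and $u$ pieces via \eqref{Fij u_ij} and \eqref{eq_b} rather than writing out $f_{;ij}$ in one line), and the boundary step hinges on the same identity $\nabla_{\mu} f = \cot\theta\, h(\mu,\mu)\, f$. The only cosmetic difference is that the paper imports this last identity from \cite[Proposition~4.3]{WWX2} and then differentiates $F = (1+\cos\theta\langle\nu,e\rangle - f)/u$, whereas you rederive it from angle preservation and expand $\nabla_\mu f$ directly; both lead immediately to $u\,\nabla_\mu F = 0$.
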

	\begin{proof}
		From \eqref{eq_b}, it is easy to see that
		\begin{eqnarray*}
			f_{;ij}&=&\cos\theta h_{ij;k}\<e_{k}, e\>-\cos\theta (h^{2})_{ij}\<\nu, e\>-F_{;ij}\<x, \nu\>-F_{;i}h_{jk}\<x, e_{k}\>\\
			&&-F_{;j}h_{ik}\<x, e_{k}\>-Fh_{ij}-Fh_{ij;k}\<x, e_{k}\>+F(h^{2})_{ij}\<x, \nu\>.
		\end{eqnarray*}
		Combining it with Proposition \ref{basic evolution eqs}, we have
		\begin{eqnarray*}
			\partial_{t}F   &=&-F^{ij}f_{;ij}-fF^{ij}(h^{2})_{ij}+\<\n F, \T\>\\
			&=&-\<\cos\theta e, \n F\>+\cos\theta \<\nu, e\>F^{ij}(h^{2})_{ij}+uF^{ij}F_{;ij}+2F^{ij}F_{;j}h_{ik}\<x, e_{k}\>\\
			&&+FF^{ij}h_{ij}+\<Fx, \n F\>-uFF^{ij}(h^{2})_{ij}-\left(1+\cos\theta\<\nu, e\>\right)F^{ij}(h^{2})_{ij}\\
			&&+uFF^{ij}(h^{2})_{ij}+\<\n F, \T\>,
		\end{eqnarray*}
		note that  $F^{ij}h_{ij}=F$, then  
		\begin{eqnarray*}
			\mathcal{L}F=F^{2}-F^{ij}(h^{2})_{ij}+2F^{ij}F_{;j}u_{;i}.
		\end{eqnarray*}
It has been shown in \cite[Proposition 4.3]{WWX2} that  on $\partial\Sigma_{t}$, 
		\begin{eqnarray*}
			\n_\mu f=\cot\theta h(\mu,\mu) f.
		\end{eqnarray*}
		Hence, combining with \eqref{deri of u},  it follows
		\begin{eqnarray*}
			\n_\mu F&=&\n_\mu \left(\frac{1+\cos\theta \langle \nu,e\rangle -f}{\langle x,\nu\rangle }\right) =0. 
		\end{eqnarray*}
	\end{proof} 
	
	For a strictly convex hypersurface $\S$, denote  $(b^{ij})$ the inverse of $(h_{ij})$, and   $$\bar H:=\sum_{i=1}^n \frac{1}{\k_i}=\sum_{i,j=1}^ng_{ij}b^{ji},$$
	which is the harmonic curvature or the sum of the principal curvature radii. The harmonic curvature satisfies the following evolution equation, together with a nice boundary inequality, provided that $\theta \in (0, \frac \pi 2]$. 
	
	\begin{prop}\label{evo of bar H}
		Along flow \eqref{flow with capillary}, $\bar H$ satisfies
		\begin{eqnarray*}
		\mathcal{L}\bar{H}&=&-2uF^{kl}b^{ip}h_{pq;l}b^{qs}h_{sr;k}b_{i}^{r}-ub_{s}^{i}b_{i}^{r}F^{kl, pq}h_{kl;s}h_{pq;r}-2b_{s}^{i}\<x, e_{i}\>g^{sr}F_{;r}\\
			&&+nu F+n-u\bar{H}F^{kl}(h^{2})_{kl}-F\bar{H}.
		\end{eqnarray*}\label{evolu of bar H}
		If $\theta\in (0,\frac{\pi}{2}]$, there holds
		\begin{eqnarray}\label{neumann  bar H}
			\nabla_{\mu}\bar{H}\leq 0, \quad {\rm on}~\partial M.
		\end{eqnarray}
	\end{prop}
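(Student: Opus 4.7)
The plan is to handle the interior evolution equation and the boundary inequality by separate arguments, since they rely on different ingredients. For the evolution equation, the starting point is to differentiate the identity $b^i_k h^k_j=\delta^i_j$ both in time and covariantly, which gives the standard formulas
$$
\partial_t b^i_j = -b^i_k b^l_j\,\partial_t h^k_l, \qquad \nabla_m b^i_j = -b^i_k b^l_j\,\nabla_m h^k_l,
$$
and, upon one further differentiation, a formula for $\nabla^2_{rm} b^i_j$ producing the characteristic quadratic-in-$\nabla h$ terms that appear in the final answer. Writing $\bar H=b^i_i$ and applying Proposition \ref{basic evolution eqs}(4) yields
$$
\partial_t \bar H = b^i_k b^l_i\,\nabla^k\nabla_l f + n f + \nabla_{\mathcal T}\bar H,
$$
the clean constant $nf$ arising from $b^i_k b^l_i (h^2)^k_l=\tr((bh)^2)=n$.

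The second step is to compute $u F^{kl}\nabla^2_{kl}\bar H$ using the analogous second-derivative formula for $b$, and to subtract it from $\partial_t\bar H$. Into $b^i_k b^l_i \nabla^k\nabla_l f$ I substitute the explicit speed $f=1+\cos\theta\<\nu,e\>-uF$ and expand using the familiar identities
$$
\nabla^2_{ij}u = h_{ij;k}\<x,e_k\>+h_{ij}-u(h^2)_{ij}, \qquad \nabla^2_{ij}\<\nu,e\> = h_{ij;k}\<e_k,e\>-(h^2)_{ij}\<\nu,e\>,
$$
together with the Simons-type expansion $F_{;ij}=F^{kl}h_{kl;ij}+F^{kl,pq}h_{kl;i}h_{pq;j}$, which is what brings the concavity tensor $F^{kl,pq}$ into the picture. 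Collecting first-order pieces against the transport field $\mathcal T+Fx-\cos\theta\,e$ appearing in $\mathcal L$, and using $F^{ij}h_{ij}=F$ together with Codazzi in flat ambient space, the surviving cancellations give exactly the six-term expression in the statement.

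For the boundary inequality I first exploit the capillary condition $\<\nu,e\>=-\cos\theta$ on $\partial\Sigma_t$: tangential differentiation along $\partial\Sigma_t$ forces $h(e_\alpha,\mu)=0$ for $\alpha=1,\dots,n-1$, so $\mu$ is a principal direction and the Weingarten tensor is block-diagonal in an adapted orthonormal frame $\{e_1,\dots,e_{n-1},e_n=\mu\}$. In such a frame
$$
\nabla_\mu \bar H = -\sum_{i=1}^n \frac{h_{ii;n}}{\kappa_i^2},
$$
and the standard capillary boundary identity (obtained by differentiating $h_{\alpha n}=0$ tangentially and invoking Codazzi, as in \cite{WeX21,WWX1}) expresses each $h_{\alpha\alpha;n}$ as $\cot\theta$ times a factor of the form $\kappa_\alpha(\kappa_\alpha-\kappa_n)$. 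The hypothesis $\theta\in(0,\tfrac{\pi}{2}]$ gives $\cot\theta\ge 0$, and pairing this with Lemma \ref{lem2.5} applied to the weights $\kappa_i^{-2}$ (which play here the role of $F^{ii}$ for the harmonic curvature) delivers the sign $\nabla_\mu\bar H\le 0$. The main obstacle in the interior computation is bookkeeping: correctly matching the $\nabla h$-quadratic terms produced by the second covariant derivative of $b$ against those coming from the Simons expansion, so that they assemble into the two displayed terms rather than leaving spurious remainders. The boundary inequality, by contrast, is genuinely constrained by the angle hypothesis---this is precisely the step where $\theta\le\pi/2$ is used, as already flagged in the discussion following Theorem \ref{thm1.1}.
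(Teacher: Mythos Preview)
Your interior computation is correct and essentially identical to the paper's: both compute $\partial_t\bar H$ from Proposition~\ref{basic evolution eqs}(4), expand $\nabla^2 f$ using the explicit speed together with the Hessian formulas for $u$ and $\<\nu,e\>$, introduce $F^{kl,pq}$ via $F_{;sr}=F^{kl}h_{kl;sr}+F^{kl,pq}h_{kl;s}h_{pq;r}$, and commute $F^{kl}h_{kl;sr}$ with $F^{kl}h_{sr;kl}$ by Codazzi/Gauss so that it matches $uF^{kl}\nabla^2_{kl}\bar H$.

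Your boundary argument, however, has a genuine gap. In the adapted frame with $e_n=\mu$ you correctly write
\[
\nabla_\mu\bar H=-\frac{h_{nn;n}}{\kappa_n^2}-\sum_{\alpha=1}^{n-1}\frac{h_{\alpha\alpha;n}}{\kappa_\alpha^2},
\]
and the capillary identity controls the tangential pieces $h_{\alpha\alpha;n}$, but it says nothing about $h_{nn;n}=h_{\mu\mu;\mu}$. You never explain how this term is handled. Your proposal to apply Lemma~\ref{lem2.5} ``to the weights $\kappa_i^{-2}$, which play the role of $F^{ii}$ for the harmonic curvature'' does not close the gap: if one literally plugs $G^{ii}=\kappa_i^{-2}$ (the derivative of $-\bar H$) into \eqref{bdry ineq}, the bracket becomes $\kappa_i^{-2}\kappa_1^{-2}-\kappa_1^{-2}\kappa_i^{-2}=0$, so the inequality is vacuous and yields no sign information.

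The missing ingredient is the boundary condition $\nabla_\mu F=0$ for the \emph{actual} curvature function $F=H_k^{1/k}$, proved in Proposition~\ref{evol of H}. This gives the linear relation $F^{nn}h_{nn;n}=-\sum_\alpha F^{\alpha\alpha}h_{\alpha\alpha;n}$, which is precisely what lets you eliminate $h_{\mu\mu;\mu}$. After substitution one obtains
\[
\nabla_\mu\bar H=\frac{1}{F^{nn}}\sum_{\alpha}\Bigl(\frac{F^{\alpha\alpha}}{\kappa_n^{2}}-\frac{F^{nn}}{\kappa_\alpha^{2}}\Bigr)h_{\alpha\alpha;n},
\]
and \emph{now} Lemma~\ref{lem2.5}, applied to $F=H_k^{1/k}$ (not to $\bar H$), pairs with $h_{\alpha\alpha;n}=\cot\theta\,\kappa_\alpha(\kappa_n-\kappa_\alpha)$ and $\cot\theta\ge 0$ to give $\nabla_\mu\bar H\le 0$. (Note also the sign: the capillary identity gives $\kappa_\alpha(\kappa_\mu-\kappa_\alpha)$, not $\kappa_\alpha(\kappa_\alpha-\kappa_\mu)$ as you wrote.)
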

	\begin{proof}
		Direct computation yields
		\begin{eqnarray*}
			\nabla_{k}\bar{H}=-b^{is}\n_k h_{sr} b^r_i,
		\end{eqnarray*}
		and
		\begin{eqnarray}\label{hessian of bar H}
			\n^l\n_k\bar{H}&=&-b^{is}\n^l\n_k h_{sr}b^r_i+2b^{ip} \n^l h_{pq}b^{qs} \n_k h_{sr}b^r_i.
		\end{eqnarray}
		From Codazzi equations, Ricci identities, and Gauss equation, we have\begin{eqnarray*}
			h_{kl;sr}&=&h_{ks;lr}=h_{ks;rl}+R^p_{slr}h_{pk}+R^p_{klr}h_{ps}
			\\&=&h_{sk;rl}+R^p_{slr}h_{pk}+R^p_{klr}h_{ps}
			\\&=&h_{sr;kl}+(h_{pl}h_{sr}-h_{pr}h_{ls})h_{pk}+(h_{pl}h_{kr}-h_{pr}h_{kl})h_{ps},
		\end{eqnarray*}and thus
		\begin{eqnarray}\label{ricci}
			b^{is}b_i^r F^{kl} h_{sr;kl }= b^{is}b_i^r F^{kl}h_{kl;sr} -\bar H F^{kl}(h^2)_{kl}+nF.
		\end{eqnarray}
		On the other hand,
		\begin{eqnarray*}
			\p_t \bar H&=&\p_t b^i_i=-b^{i }_s\p_t h_{r}^s b^r_i
			\\&=&-b^{i}_s\left(-\n^s\n_{r}f -fh_{r}^ph^{s}_p+\n_\T h^s_r\right) b^l_i
			\\&=&b^i_s b^r_i \n^s\n_r f+nf+\n_\T\bar H.
		\end{eqnarray*}
		From the Codazzi formula, we have
		\begin{eqnarray*}
			\langle x,\nu\rangle_{;kl}&=  h_{kl}+h_{kl;s}\langle x,e_s\rangle -(h^2)_{kl}\langle x,\nu\rangle ,
		\end{eqnarray*}
		and
		\begin{eqnarray*}
			\langle \nu,e\rangle_{;kl}= h_{kl;s}\langle e_s,e\rangle -(h^2)_{kl}\langle \nu,e\rangle ,
		\end{eqnarray*}
		it follows
		\begin{eqnarray*}
			f_{;sr}&=& \n^s\n_r ( {1+ \cos\theta \<\nu,e\>}-{F} \<x,\nu\>)
			\\&=& \cos\theta   \Big(h_{sr;j}\langle e_j,e\rangle -(h^2)_{sr}\langle \nu,e\rangle\Big)-   \left(h_{sr}+h_{sr;j}\langle x,e_j\rangle -(h^2)_{sr}\langle x,\nu\rangle\right)F\\&& -\<x,\nu\> \n^s\n_r F-\n^s F\n_r \<x,\nu\>-\n_r F\n^s \<x,\nu\>,
		\end{eqnarray*}
		combining with
		\begin{eqnarray*}
			F_{;sr}=F^{kl,pq}h_{kl;s}h_{pq;r}+F^{kl}h_{kl;sr},
		\end{eqnarray*}
		it gives
		\begin{eqnarray*}
			b_{s}^{i}b_{i}^{r}f_{; sr}&=&-\<x, \nu\>b_{s}^{i}b_{i}^{r}\left(F^{kl}h_{kl; sr}+F^{kl, pq}h_{kl; s}h_{pq; r}\right)-\<\cos\theta e, \nabla\bar{H}\>-n\cos\theta \<\nu, e\>\\
			&&-F\bar{H}+\<Fx, \nabla \bar{H}\>+n\<x, \nu\>F-2b_{s}^{i}\<x, e_{i}\>\nabla^{s}F.
		\end{eqnarray*}
		Plugging   \eqref{hessian of bar H} and \eqref{ricci} into $\p_t \bar H$, we have 
		\begin{eqnarray*}
			\partial_{t}\bar{H}&=&-\<x, \nu\>b_{s}^{i}b_{r}^{i}F^{kl}h_{kl;sr}-\<x, \nu\>b_{s}^{i}b_{i}^{r}F^{kl, pq}h_{kl;s}h_{pq;r}-2b_{s}^{i}\<x, e_{i}\>\n^{s}F\\
			&&+\<\T+Fx-\cos\theta e, \n \bar{H}\>+n-F\bar{H}\\
			&=&u F^{kl}\n_k\n_{l}H-2uF^{kl}b^{ip}\n^{l}h_{pq}b^{qs}\n_{k}h_{sr}b_{i}^{r}-\<x, \nu\>\bar{H}F^{kl}(h^{2})_{kl}\\
			&&+nu F
			-\<x, \nu\>b_{s}^{i}b_{i}^{r}F^{kl, pq}h_{kl;s}h_{pq;r}-2b_{s}^{i}\<x, e_{i}\>\n^{s}F\\
			&&+\<\T+Fx-\cos\theta e, \n \bar{H}\>+n-F\bar{H}.
		\end{eqnarray*}
		It follows
		\begin{eqnarray*}
			\mathcal{L}\bar{H}&=&-2uF^{kl}b^{ip}h_{pq;l}b^{qs}h_{sr;k}b_{i}^{r}-ub_{s}^{i}b_{i}^{r}F^{kl, pq}h_{kl;s}h_{pq;r}-2b_{s}^{i}\<x, e_{i}\>g^{sr}F_{;r}\\
			&&+nu F+n-u\bar{H}F^{kl}(h^{2})_{kl}-F\bar{H}.
		\end{eqnarray*}
		
		Along $\p\S_t$, choosing an orthonormal frame $\{e_\alpha\}_{\alpha=2}^{n}$ of $T{\p\S_t}$ such that $\{e_1:=\mu,(e_\alpha)_{\alpha=2}^n\}$ forms an orthonormal frame for $T\S_t$.
		\begin{eqnarray*}\label{neumanof F}
			\n_\mu F=0, \quad \text{ i.e., } ~F^{11}h_{11;1}+F^{\alpha\alpha}h_{\alpha\alpha;1}=0,
		\end{eqnarray*}
		then
		\begin{eqnarray*}
			h_{11;1}=-\frac{ F^{\alpha\alpha}h_{\alpha\alpha;1}}{F^{11}}.
		\end{eqnarray*}
		Recall that $\k_i=h_{ii}$ and $\k\in \Gamma_n$, using Lemma \ref{lem2.5}, it yields $$(h_{11}-h_{\alpha\alpha})\left[(b^{11})^{2}F^{\alpha\alpha}-(b^{\alpha\alpha})^{2} F^{11}\right]=(\k_1-\k_\alpha)\left(\frac{F^{\alpha\alpha}}{\k_1^2}- \frac{F^{11}}{\k_\alpha^2}\right)\leq 0.$$ 
		From \cite[Proposition 2.3 (3)]{WWX2}, we know
		\begin{eqnarray*}
			h_{\alpha\alpha;\mu}=\cot\theta h_{\alpha\alpha}(h_{11}-h_{\alpha\alpha}),
		\end{eqnarray*}
		combining with $\theta\in(0,\frac{\pi}{2}]$,  
		\begin{eqnarray}
			\nabla_{\mu}\bar{H} \notag &=&-(b^{11})^{2}h_{11;\mu}-(b^{\alpha\alpha})^2 h_{\alpha\alpha;\mu}\\ \notag
			&=&\frac{1}{F^{11}}\left[(b^{11})^{2}F^{\alpha\alpha}-(b^{\alpha\alpha})^{2} F^{11}\right]h_{\alpha\alpha;\mu}\\ \notag
			&=&\cot\theta h_{\alpha\alpha}(h_{11}-h_{\alpha\alpha})\left[(b^{11})^{2}F^{\alpha\alpha}-(b^{\alpha\alpha})^{2} F^{11}\right]
			\\&\leq &0. \notag %\label{angle restrict}
		\end{eqnarray}
	\end{proof}

\section{Long-time existence and convergence}\label{sec3}

As the initial capillary hypersurfaces $\S_0$ of strictly convex in $\ov{ \R^{n+1}_+}$, the short-time existence follows by adapting the argument as in  \cite{HP1999}. Let $T^*>0$ be the maximal existence time of convex solution to flow \eqref{flow with capillary}. We firstly reduce flow \eqref{flow with capillary} to a scalar parabolic flow \eqref{scalar flow with capillary-1}, then we derive uniform a priori estimates, and show that the convexity is preserved along flow \eqref{flow with capillary}. Finally, we finish the proof of Theorem \ref{thm1.1}.

\subsection{A scalar flow}\

Assume that  a capillary hypersurface 
$\S\subset \ov{\RR^{n+1}_+}$ is  star-shaped with respect to the origin. 
One can reparametrize it as a graph over $\overline{\mathbb{S}}^n_+$. Namely, 
there exists a positive function $\rho$ defined on $\overline{\SS}^n_+$ such that
\begin{eqnarray*}
	\S=\left\{ \rho ( X)X ~ |    X\in \overline{\SS}^n_+\right\},
\end{eqnarray*}where $X:=(X_1,\ldots, X_n)$ is a local coordinate of $\overline{\SS}^n_+$.
We  denote  $\bar\n$ as the   Levi-Civita connection on $\SS^n_+$ with respect to the standard round metric $\sigma:=g_{_{\SS^n_+}}$, $\p_i:=\p_{X_i}$,  $\sigma_{ij}:=\sigma(\p_{i},\p_{j})$,  $\rho_i:=\bar\n_{i} \rho$, and $\rho_{ij}:=\bar\n_{i}\bar\n_j \rho$.

In order to express the capillary boundary condition in terms of the radial function $\rho$, we use the polar coordinate in 
the half-space. For $x:=(x',x_{n+1})\in \RR^n\times [0,+\infty)$  and $X:=(\beta,\xi)\in [0,\frac{\pi}{2}]\times \SS^{n-1}$, we have that 
\begin{eqnarray*}
x_{n+1}=r\cos\beta,\quad ~~ |x'|=r\sin\beta.
\end{eqnarray*} 
Then
\begin{eqnarray*}
	e_{n+1}=\cos\beta \p_r-\frac{\sin\beta}{r}\p_\beta.
\end{eqnarray*} 
In these coordinates, the standard Euclidean metric is given by
$$ |dx|^2=dr^2+r^2\left(d\beta^2+\sin^2\beta g_{_{\SS^{n-1}}}\right),$$
and
\begin{eqnarray*}
	\langle \nu,e_{n+1}\rangle =\frac{1}{v}\left(\cos\beta+\sin\beta \bar\n_{\p_\beta}    \varphi \right).
\end{eqnarray*}

 Define $\varphi(X,t):=\log \rho(X,t)$ and $v:=\sqrt{1+|\bar\nabla\varphi|^{2}}$, for $X\in \ol{\SS}^n_+$,  then the flow \eqref{flow with capillary} can be reduced to a scalar parabolic PDE with oblique boundary value condition (see e.g., \cite{WWX1, WX20}). That is,
\begin{eqnarray}\label{scalar flow with capillary-1}
\left\{	\begin{array}{rcll}
\p_t \varphi &=&\frac{v}{e^{\varphi}}\widehat{f},\quad &\text{ in } \SS^n_+\times [0,T^*),  \\
\bar\n_{\p_\beta} \varphi&=&\cos\theta \sqrt{1+|\bar\n\varphi|^2},\quad &\text{ on }
\p\SS^n_+\times [0,T^*),\\
\varphi(\cdot,0)&=&\varphi_0(\cdot),\quad &\text{ on } \SS^n_+,
\end{array}\right.
\end{eqnarray}
where $\varphi_0 $ is the parameterization radial function of $x_0(M)$ over $\overline{\SS}^n_+$, $\p_\beta$ the unit outward normal of $\p \SS^n_+$ in $\overline \SS^n_+$, and
\begin{eqnarray*}
	\widehat{f}:=\left(1-\frac{\cos\theta}{v}(\cos\beta+\sin\beta\bar\nabla_{\partial_{\beta}}\varphi)\right)-\frac{e^{\varphi}}{v} H_{k}^{\frac 1 k} .
\end{eqnarray*}

\subsection{$C^{0}$ and $C^{1}$ estimates}

The strict convexity of $\Sigma_0$ implies that there exists some $0<r_1<r_2<+\infty$, such that 
	$$\S_0\subset \widehat{\C_{\theta,r_2}}\setminus \widehat{\C_{\theta,r_1}},$$
	where $\widehat{\C_{\theta,r}}$ denotes the  bounded domain in $\ov{\R^{n+1}_+}$ enclosed by $\C_{\theta,r}$ and $\p\R^{n+1}_+$.
	Following the argument in \cite[Proposition 4.2]{WWX1}, which is based on the avoidable principle, we have 

\begin{prop}\label{upper bound of u}
		For all $t\in [0, T^*)$, along flow \eqref{flow with capillary}, there holds
		\begin{eqnarray*}\label{c0 bound}
			\Sigma_{t}\subset \widehat{\C_{\theta,r_2}}\setminus\ \widehat{\C_{\theta,r_1}}.
		\end{eqnarray*}
		where $\C_{\theta,r}$ is defined by \eqref{sph-cap} and $r_{1}, r_{2}$ only depend on $\Sigma_{0}$.
	\end{prop}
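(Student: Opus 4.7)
The plan is to apply the parabolic avoidance principle to the scalar reformulation \eqref{scalar flow with capillary-1}, using the spherical caps $\C_{\theta,r}$ as barriers. The crucial input is that each $\C_{\theta,r}$ is a stationary solution of flow \eqref{flow with capillary} via \eqref{static eq0}, and consequently its log-radial parametrization $\varphi_{r}$ is a stationary solution of \eqref{scalar flow with capillary-1}, satisfying both the fully nonlinear interior PDE and the oblique boundary condition. Since $\S_0$ is strictly convex, it is star-shaped about the origin and hence admits a radial parametrization $\varphi_0$ on $\ov{\SS}^n_+$; one may thus select $0 < r_1 < r_2 < \infty$ with $\varphi_{r_1} < \varphi_0 < \varphi_{r_2}$ pointwise on $\ov{\SS}^n_+$.

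I would then run a comparison argument on $w := \varphi - \varphi_{r_2}$. Interpolating along $\varphi_s := (1-s)\varphi_{r_2} + s\varphi$, $s \in [0,1]$, and integrating in $s$, the difference of the PDEs becomes a linear parabolic equation
\[
\p_t w = a^{ij}(X,t)\,\bar\n^2_{ij} w + b^i(X,t)\,\bar\n_i w + c(X,t)\,w,
\]
where $(a^{ij})$ is positive definite thanks to the strict convexity of each $\varphi_s$, which is preserved on $[0,T^*)$ by the definition of $T^*$. The same interpolation applied to $\bar\n_{\p_\beta}\varphi - \cos\theta\sqrt{1+|\bar\n\varphi|^2} = 0$ yields a linear boundary condition whose coefficient of $\bar\n_{\p_\beta} w$ equals $1 - \cos\theta\,\bar\n_{\p_\beta}\varphi^{\ast}/\sqrt{1+|\bar\n\varphi^{\ast}|^2}$; since each interpolant satisfies the contact-angle relation, this reduces to $1 - \cos^2\theta = \sin^2\theta > 0$, so the problem is strictly oblique. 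The standard maximum principle for linear parabolic equations with oblique boundary conditions then forces $w \leq 0$ on $\SS^n_+\times[0,T^{*})$, i.e.\ $\S_t \subset \wh{\C_{\theta,r_2}}$. The inner barrier follows by the same argument applied to $\varphi_{r_1} - \varphi$.

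The main obstacle is verifying the strict obliqueness of the linearized boundary condition; once this is in place, standard parabolic theory (cf.\ Lieberman) delivers the maximum principle. An equivalent geometric approach avoids linearization altogether: at a hypothetical first interior touching between $\S_t$ and $\C_{\theta,r_2}$ the positions, outward unit normals, and contact angles coincide, and convex inclusion yields $H_k(\S_t) \leq H_k(\C_{\theta,r_2})$ at the touching point, which combined with \eqref{static eq0} for $\C_{\theta,r_2}$ and the flow equation \eqref{flow with capillary} produces a contradictory sign for the normal velocity; a parallel argument at a boundary touching point uses strict obliqueness to rule out a tangential first contact. This second line is the one taken in \cite[Proposition 4.2]{WWX1}, and the resulting bounds $r_1, r_2$ depend only on $\S_0$.
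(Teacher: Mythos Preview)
Your second, geometric approach is exactly what the paper does: it invokes the avoidance principle from \cite[Proposition~4.2]{WWX1} without further detail, using that the caps $\C_{\theta,r}$ are stationary solutions of \eqref{flow with capillary}. So that part of your proposal matches the paper.

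Your first approach, however, has two gaps worth flagging. First, the claim that each linear interpolant $\varphi_s=(1-s)\varphi_{r_2}+s\varphi$ corresponds to a strictly convex hypersurface is neither justified nor generally true. In the radial-graph parametrization the second fundamental form is a positive scalar times $-\varphi_{;ij}+\varphi_i\varphi_j+\sigma_{ij}$, and a short computation shows that the analogous matrix for $\varphi_s$ equals the convex combination of the two endpoint matrices \emph{minus} the rank-one term $s(1-s)(\varphi_i-(\varphi_{r_2})_i)(\varphi_j-(\varphi_{r_2})_j)$; this can destroy positivity. The definition of $T^*$ only guarantees convexity of the actual solution $\varphi$, not of such interpolants, so you cannot conclude that $(a^{ij})$ is positive definite on all of $\ov{\SS}^n_+\times[0,T^*)$ this way. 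Second, the interpolants do not satisfy the nonlinear contact-angle relation, so your derivation of the coefficient $\sin^2\theta$ is incorrect; fortunately obliqueness still holds by the cruder bound $1-\cos\theta\,(\varphi_s)_\beta/\sqrt{1+|\bar\n\varphi_s|^2}\geq 1-|\cos\theta|>0$, but the argument as written does not establish it.

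Both issues disappear if you abandon the global linearization and argue, as in your second paragraph, at a first touching point: there the $1$-jets coincide, so no interpolation is needed, and monotonicity of $H_k^{1/k}$ in the Hessian together with the oblique boundary condition yields the contradiction directly. That is the clean route, and it is the one the paper (via \cite{WWX1}) takes.
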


	In what follows, we indicate $C$ to be the positive constant (which may vary line from line) depending only on the initial datum $\S_0$.  	Next, we show the uniform lower bound of $u$.
	\begin{prop}	\label{lower bound of u}
		Let $\S_0$ be a strictly convex hypersurface with capillary boundary in $\ov{ \RR^{n+1}_+}$ and $\theta \in(0,\pi)$, there holds
	\begin{eqnarray*}\label{c1 est}
			\langle x,\nu\rangle(p,t) \geq C,~ ~ ~ \forall ~(p,t)\in M\times [0,T^*),
		\end{eqnarray*} where the positive constant $C$ depends only on $\S_0$.
	\end{prop}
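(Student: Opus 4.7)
The natural approach is to apply the parabolic minimum principle, but to the capillary support function $\bar u = \langle x,\nu\rangle/(1+\cos\theta\langle\nu,e\rangle)$ from Proposition \ref{evo of relative u} rather than to $u$ directly, because $\bar u$ satisfies the clean homogeneous Neumann boundary condition \eqref{neumann of bar u}. Once a uniform positive lower bound $\bar u\geq c_0>0$ is obtained, the desired estimate follows from
\[
u \;=\; \bar u\bigl(1+\cos\theta\langle\nu,e\rangle\bigr) \;\geq\; c_0\,(1-|\cos\theta|),
\]
and $1-|\cos\theta|>0$ for $\theta\in(0,\pi)$.

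To bound $\bar u$ from below, I would argue as follows. Since $\Sigma_t$ stays strictly convex on $[0,T^*)$ by the definition of $T^*$, the principal curvatures lie in $\Gamma_n$, and Lemma \ref{lem2.4} gives $F^{ij}(h^2)_{ij}\geq F^2$. Let $(p_0,t_0)$ with $t_0>0$ be a first spatial–temporal minimum of $\bar u$; then $\nabla \bar u(p_0,t_0)=0$, either automatically in the interior or, on $\partial M$, by combining the vanishing of tangential derivatives at a boundary minimum with the Neumann condition \eqref{neumann of bar u}. Using $u=\bar u(1+\cos\theta\langle\nu,e\rangle)>0$ (which persists as long as $\bar u$ stays positive, hence up to $t_0$) together with the positive-definiteness of $(F^{ij})$, the term $uF^{ij}\bar u_{;ij}$ is non-negative at $(p_0,t_0)$, while the gradient term in \eqref{evo of bar u} vanishes, so
\[
\partial_t \bar u(p_0,t_0) \;\geq\; 1-2\bar uF+\bar u^2 F^{ij}(h^2)_{ij} \;\geq\; (1-\bar uF)^2 \;\geq\; 0,
\]
contradicting the choice of $t_0$. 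Hence $\min_M\bar u(\cdot,t)$ is non-decreasing and $\bar u\geq c_0:=\min_M\bar u(\cdot,0)$. Strict positivity $c_0>0$ follows from the strict convexity of $\Sigma_0$ combined with Proposition \ref{upper bound of u}: the inner cap $\widehat{\mathcal{C}_{\theta,r_1}}\subset\widehat{\Sigma_0}$ places the origin in the closure of the enclosed region; at interior points of $\Sigma_0$ strict convexity gives $\langle x,\nu\rangle>0$ directly, and at boundary points a short computation using $\nu=\sin\theta\,\bar\nu-\cos\theta\,e$ on $\partial\Sigma_0$ together with $\langle x,e\rangle=0$ yields $\langle x,\nu\rangle=\sin\theta\,\langle x,\bar\nu\rangle>0$.

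The main obstacle I anticipate is the boundary case: at a boundary minimum $p_0\in\partial M$, only the tangential part of $\nabla^2\bar u$ is automatically positive semidefinite, so justifying $F^{ij}\bar u_{;ij}(p_0,t_0)\geq 0$ is not immediate. This is handled by the parabolic Hopf lemma adapted to the Neumann condition \eqref{neumann of bar u} — which, together with $\nabla_\mu\bar u=0$, rules out a strict decrease at such a boundary minimum — or alternatively by a barrier perturbation of the form $\bar u+\epsilon e^{At}$ with $A$ large, followed by taking $\epsilon\to 0$; either device turns the boundary minimum into an effectively interior one for the maximum principle argument.
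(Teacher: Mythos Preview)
Your approach is essentially the same as the paper's: both apply the maximum principle to $\bar u$ using the evolution equation \eqref{evo of bar u}, the Neumann condition \eqref{neumann of bar u}, and the inequality $F^{ij}(h^2)_{ij}\geq F^2$ from Lemma~\ref{lem2.4} to obtain $\mathcal{L}\bar u\geq(1-\bar uF)^2\geq 0$ modulo $\nabla\bar u$, concluding $\bar u\geq\min_M\bar u(\cdot,0)$ and hence $u\geq C$. Your write-up is more careful on the boundary case and on the positivity of $\min_M\bar u(\cdot,0)$, and your bound $1-|\cos\theta|$ is valid for the full range $\theta\in(0,\pi)$ stated in the proposition (the paper writes $1-\cos\theta$, tacitly using its standing restriction to $\theta\in(0,\frac{\pi}{2})$).
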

	
	\begin{proof}
		Combining Proposition \ref{evo of relative u} and Lemma \ref{lem2.4}, we see
		\begin{eqnarray*}
			\L \bar u&=&  
			1-2\bar{u}F+\bar{u}^{2}F^{ij}(h^{2})_{ij} \quad \text{ mod} \quad \n \bar u	\\&\geq & \left( 1-\bar{u}F\right)^2\geq 0,
		\end{eqnarray*}	 together with   $\n_\mu \bar u=0$ on $\p M$, it implies	\begin{eqnarray*}
			\bar u\geq \min_{M} \bar u(\cdot,0).
		\end{eqnarray*}
		Since  $1+\cos\theta \<\nu,e\>\geq 1-\cos\theta >0$, we conclude 
		\begin{eqnarray*}
			\<x,\nu\>=\bar u\cdot (1+\cos\theta \<\nu,e\>)\geq C,
		\end{eqnarray*} for some positive constant $C>0$ independent of $t$.
	\end{proof}

\subsection{Curvature estimates}\
	
	In this section, firstly, we show that the curvature  $F:=H_k^{\frac 1 k}$ has uniform lower and upper bounds. 
 The proof argument and the test function here are similar to \cite{WWX2} and \cite{MWW}, for completeness, we contain a proof here.
	Secondly, we show that strict convexity is preserved along flow \eqref{flow with capillary}.
	\begin{prop}\label{upper bound of F}
		If $\Sigma_{t}$ solves flow \eqref{flow with capillary}, then  
			\begin{eqnarray*}
			F(p, t)\leq \max_{M}F(\cdot, 0),\quad\quad \forall~ (p, t)\in M\times [0,T^*).
		\end{eqnarray*}
	\end{prop}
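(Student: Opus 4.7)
The plan is to apply the parabolic maximum principle directly to the evolution equation for $F$ derived in Proposition \ref{evol of H}, combined with the Neumann boundary condition $\nabla_\mu F=0$ on $\partial M$ established in the same proposition.

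First, I would rewrite the evolution equation
\begin{eqnarray*}
\mathcal{L}F = 2F^{ij}F_{;i}u_{;j} + F^2 - F^{ij}(h^2)_{ij}
\end{eqnarray*}
as a parabolic inequality. The term $2F^{ij}F_{;i}u_{;j}$ is linear in $\nabla F$ and can be absorbed into the transport part; the crucial observation is that Lemma \ref{lem2.4}, applied under the convexity hypothesis $\kappa\in\Gamma_n$ (which by assumption holds at $t=0$ and will hold for as long as strict convexity is preserved), gives
\begin{eqnarray*}
F^2 - F^{ij}(h^2)_{ij}\le 0.
\end{eqnarray*}
Hence $F$ satisfies
\begin{eqnarray*}
\partial_t F - uF^{ij}\nabla^2_{ij}F - \bigl\langle \mathcal{T}+Fx-\cos\theta\, e + 2F^{ij}u_{;j}e_i,\, \nabla F\bigr\rangle \le 0.
\end{eqnarray*}
Since $F=H_k^{1/k}$ is concave on $\Gamma_k$, the matrix $(F^{ij})$ is positive (semi)definite on $\Gamma_n\subset\Gamma_k$, and $u=\langle x,\nu\rangle>0$ by Proposition \ref{lower bound of u}, so this is a legitimate (degenerate) parabolic inequality with a bounded first-order drift.

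Next I would invoke the parabolic maximum principle. For an interior space-time maximum $(p_0,t_0)$ of $F$ with $t_0\in(0,T^*)$, one has $\partial_t F\ge 0$, $\nabla F=0$, and $F^{ij}\nabla^2_{ij}F\le 0$; plugging these into the above inequality yields the contradictory conclusion that $F$ cannot strictly increase beyond its initial maximum. If the maximum is attained at a boundary point $(p_0,t_0)\in\partial M\times(0,T^*)$, I would apply the Hopf boundary point lemma: a strict boundary maximum would force $\nabla_\mu F(p_0,t_0)>0$, contradicting \eqref{deri of H}. Therefore the maximum of $F(\cdot,t)$ is non-increasing in $t$, which gives the claimed bound $F(p,t)\le \max_M F(\cdot,0)$.

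The main (and essentially only) subtlety is ensuring that the curvatures remain in $\Gamma_n$ so that Lemma \ref{lem2.4} applies throughout $[0,T^*)$; this is guaranteed by the convention adopted by the authors that $T^*$ denotes the maximal existence time of a \emph{convex} solution, and by the preservation of strict convexity (to be proved in the subsequent proposition). The gradient term $2F^{ij}F_{;i}u_{;j}$ at first glance looks worrying because it is quadratic in first derivatives of $F$, but since one of the factors $u_{;j}$ does not involve $F$, it is in fact only linear in $\nabla F$ with a bounded (by the $C^1$ estimates) coefficient, so it poses no obstruction to the maximum principle argument.
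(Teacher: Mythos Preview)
Your proposal is correct and follows essentially the same approach as the paper's proof, which simply notes that $\mathcal{L}F\le 0$ modulo $\nabla F$ by Lemma~\ref{lem2.4} and then invokes the maximum principle together with the Neumann condition $\nabla_\mu F=0$. One minor slip: the positive-definiteness of $(F^{ij})$ on $\Gamma_n$ comes from the monotonicity (ellipticity) of $H_k^{1/k}$ in each principal curvature, not from its concavity.
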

	\begin{proof}
		From equation \eqref{evo of H} and $F^{ij}(h^{2})_{ij}\geq F^2$, 
		\begin{eqnarray*}
			\L F\leq 0,\quad \text{ mod } \n F,
		\end{eqnarray*}and $\n_{\mu }F =0$ on $\p M$, hence the conclusion follows directly from  the maximum principle.
	\end{proof}
	\begin{prop}\label{lower bound of F}
		If  $\Sigma_{0}$ is a strictly convex capillary hypersurface  and $\S_t$ solves flow \eqref{flow with capillary}, then %  in $\bar \RR^{n+1}_+$, then
		\begin{eqnarray*}
			F(p, t)\geq C,\quad\quad~ \forall  ~(p, t)\in M\times [0,T^*),
		\end{eqnarray*}
		where the positive constant  $C$ depends on the initial datum. % and $\cos\theta$.
	\end{prop}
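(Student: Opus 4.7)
The plan is to apply the parabolic maximum principle to the auxiliary quantity $Q:=\bar u F$, motivated by the fact that $Q\equiv 1$ identically on each static spherical cap $\C_{\theta,r}$ by \eqref{static eq0}. Since $\bar u$ is already bounded above and below by positive constants (via the $C^0$-estimate in Proposition \ref{upper bound of u} together with the lower bound $1+\cos\theta\<\nu,e\>\geq 1-\cos\theta>0$, and Proposition \ref{lower bound of u}), any positive lower bound for $Q$ translates directly into the desired positive lower bound $F\ge c>0$.

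First I would combine Propositions \ref{evo of relative u} and \ref{evol of H} to derive an evolution equation of the schematic form
\begin{eqnarray*}
\mathcal{L}(\bar u F)=(1-\bar u F)\bigl(F-\bar u F^{ij}(h^{2})_{ij}\bigr)+(\text{gradient terms in }\nabla \bar u,\ \nabla F),
\end{eqnarray*}
where the gradient terms are explicit linear combinations of $\nabla F$ and $\nabla\bar u$ (including the cross term $-2uF^{ij}\bar u_{;i}F_{;j}$ from the second-order operator) that vanish at any critical point of $Q$. Concurrently, combining the Neumann identities \eqref{neumann of bar u} and \eqref{deri of H} gives $\nabla_{\mu}Q=\bar u\,\nabla_{\mu}F+F\,\nabla_{\mu}\bar u=0$ on $\partial M$, so a Hopf-lemma step rules out a strict spatial minimum on the boundary and the extremum analysis reduces to the interior.

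At an interior space--time minimum of $Q$ with value $Q_{0}<1$, the parabolic maximum principle yields $\mathcal LQ\le 0$ and $\nabla Q=0$; the evolution formula then forces
\begin{eqnarray*}
\bar u\,F^{ij}(h^{2})_{ij}\ge F.
\end{eqnarray*}
Together with Lemma \ref{lem2.4} ($F^{ij}(h^{2})_{ij}\ge F^{2}$), the upper bound $F\le\max_M F(\cdot,0)$ from Proposition \ref{upper bound of F}, and the two-sided positive bound on $\bar u$, this algebraic constraint is to be pushed to a quantitative lower bound for $Q_{0}$ depending only on $\Sigma_{0}$; from $Q\ge c(\Sigma_{0})>0$ and $\bar u\le C$ the desired $F\ge c/C>0$ follows immediately.

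The main obstacle is that Lemma \ref{lem2.4} on its own upgrades the inequality at the minimum only to a strict form of $F^{ij}(h^{2})_{ij}\ge F^{2}$ with no effective gap, which is not quite enough to close the contradiction for the bare test function $\bar u F$. I therefore expect to need a mild refinement --- for instance replacing $Q$ by $\bar u F\,e^{-\eta u}$ or $\bar u F+\eta u$ for a suitably chosen small constant $\eta>0$, in the spirit of \cite{WWX2} and \cite{MWW} --- so as to generate an additional strictly negative term at the minimum that decisively forces $Q$ to stay above a positive constant. The boundary analysis for such a modified $Q$ remains under control because $\nabla_\mu u=\cot\theta\,h(\mu,\mu)u$ from \eqref{deri of u} couples with the vanishing Neumann conditions for $\bar u$ and $F$, and Lemma \ref{lem2.5} furnishes the requisite sign (under $\theta\in(0,\tfrac{\pi}{2}]$) for any boundary curvature terms arising from the modification.
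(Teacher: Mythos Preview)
Your plan coincides with the paper's proof: it uses exactly the test function $P=F\bar u$, obtains the same evolution identity (the paper writes the gradient remainder as $2\cos\theta\,\bar u\,F^{ij}P_{;i}\langle\nu_{;j},e\rangle$, which vanishes at a critical point of $P$), and the same homogeneous Neumann condition $\nabla_\mu P=0$ from \eqref{neumann of bar u} and \eqref{deri of H}. At an interior minimum the paper arrives at precisely your factored inequality, written there as $\bigl(\tfrac1F-\bar u\bigr)\bar u\,F^{ij}(h^2)_{ij}+\bar uF\ge 1$.

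The only divergence is at the closing step. The paper does \emph{not} modify the test function: it splits into the case $\bar uF\ge 1$ (done) and the case $\bar uF<1$, and in the latter simply invokes Lemma~\ref{lem2.4} to assert $\bar uF(p_0)\ge c$ for a constant depending only on $n,k$. So the refinement you anticipate (multiplying by $e^{-\eta u}$ or adding $\eta u$) is not part of the paper's argument. Your caution about this step is understandable---the paper does not spell out how $F^{ij}(h^2)_{ij}\ge F^2$ alone extracts a uniform lower bound on $\bar uF$ from the displayed inequality---but as a comparison, your proposed modification is an additional layer the paper does not introduce. Note also that grafting a $u$-term onto the test function would spoil the clean Neumann condition (since $\nabla_\mu u=\cot\theta\,h(\mu,\mu)\,u\neq 0$ in general) and would drag Lemma~\ref{lem2.5} and the angle restriction $\theta\le\tfrac{\pi}{2}$ into a step where the paper explicitly uses neither; the contact-angle hypothesis enters the paper only later, in the convexity estimate via \eqref{neumann bar H}.
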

	\begin{proof}
		We use the  test function $$P:=F\bar u.$$ In view of \eqref{neumann of bar u} and \eqref{deri of H},   we have \begin{eqnarray}\label{neuman of P}
			\nabla_{\mu}P= 0,\quad \text{ on } \p M. 
		\end{eqnarray}
	From \eqref{evo of bar u} and \eqref{evo of H},
		\begin{eqnarray} \label{evolution P} 
			\L P&=& \bar u\mathcal{L}F+F\mathcal{L} \bar u-2uF^{ij}\bar{u}_{;i} F_{;j} \\ \notag
			&=& -\bar{u}F^{2}+\bar{u}^{2}FF^{ij}(h^{2})_{ij}-\bar{u}F^{ij}(h^{2})_{ij}+F+2\cos\theta\bar{u}F^{ij}P_{;i}\<\nu_{;j}, e\>.\notag
		\end{eqnarray} 
		Taking \eqref{neuman of P} and the Hopf boundary Lemma into account,	if $P$ attains the minimum value at $t=0$, then we are done, since the conclusion follows directly by using the uniform bound of $\bar u$. Therefore, we assume that
		$P$ attains the minimum value at some interior point, say $p_0\in \text{int}(M)$. At $p_0$,  there hold
		$$\n P=0, \quad \L P\leq 0.$$ Substituting  it into \eqref{evolution P}, it yields
		\begin{eqnarray}\label{2nd polynoimial}
			\left( \frac 1 F-\bar u\right) \bar uF^{ij}(h^{2})_{ij}+\bar uF\geq 1.
		\end{eqnarray}	 
		
		If $\bar uF\geq 1$ at $p_0$, then we are done. Now assume that $\bar uF<1$ at $p_0$, by Lemma \ref{lem2.4}, we obtain 
		\begin{eqnarray*}
			\bar	uF(p_0, t)\geq c,
		\end{eqnarray*}for some positive constant $c$, which only depends  on $n$ and $k$. This also yields the desired estimate.
	\end{proof}
	
Next, we show that the fully nonlinear curvature flow \eqref{flow with capillary} preserves the convexity if $\theta\in (0,\frac{\pi}{2}]$.  We remark that this is the only place where we have used the restriction of contact angle range.

	\begin{prop}\label{preserve convexity}
		Let $\Sigma_{0}$ be a strictly convex hypersurface, if $\Sigma_{t}$ solves flow \eqref{flow with capillary} %with the initial value $\Sigma_{0}$ 
		with  $\theta\in (0,\frac{\pi}{2}]$, then
		\begin{eqnarray}\label{convex pre}
			\min_{1\leq i\leq n}\kappa_{i}(p, t)\geq C,~~~ \forall (p, t)\in M\times[0, T^*),
		\end{eqnarray}where the positive constant  $C$ depends on the initial datum. 
		
	\end{prop}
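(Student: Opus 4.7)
The strategy is to control the harmonic curvature $\bar H := \sum_{i=1}^n \kappa_i^{-1}$ from above. Since $\kappa_{\min}^{-1} \leq \bar H \leq n\,\kappa_{\min}^{-1}$ on any strictly convex hypersurface, a uniform upper bound on $\bar H$ is exactly equivalent to \eqref{convex pre}. Proposition \ref{evo of bar H} supplies both the evolution equation for $\bar H$ and the boundary inequality $\nabla_\mu \bar H \leq 0$, which is valid precisely because $\theta \in (0,\pi/2]$. By Hopf's boundary lemma, any spatial maximum of $\bar H$ at time $t>0$ must be attained at some interior point $p_0 \in \mathrm{int}(M)$.

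At such an interior maximum we have $\nabla \bar H = 0$ and $\partial_t \bar H \leq \mathcal{L} \bar H$. The dominant negative contribution in Proposition \ref{evo of bar H} is
$$-u \bar H F^{kl}(h^2)_{kl} \leq -c_0\, \bar H,$$
using $u \geq C_0 > 0$ (Proposition \ref{lower bound of u}), the inequality $F^{kl}(h^2)_{kl} \geq F^2$ of Lemma \ref{lem2.4}, and the uniform lower bound on $F$ (Proposition \ref{lower bound of F}). The remaining explicit terms $nuF + n - F\bar H$ contribute a bounded $O(1)$ piece together with the further non-positive $-F\bar H$, both absorbable into the dominant term.

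The main obstacle is the interplay of the two quadratic gradient terms
$$-2uF^{kl}b^{ip}h_{pq;l}b^{qs}h_{sr;k}b_i^r \quad \text{and} \quad -u\,b_s^i b_i^r F^{kl,pq} h_{kl;s} h_{pq;r}.$$
The first has the favorable non-positive sign, while the second is non-negative by the concavity of $F = H_k^{1/k}$ on $\Gamma_k$ (Lemma \ref{lem2.2}). Absorbing the non-negative concavity quadratic into the non-positive Codazzi quadratic is the heart of the argument. This is handled via the inverse-concavity of $F = H_k^{1/k}$ on $\Gamma_n$—the same property underpinning Lemma \ref{lem2.5} and the Andrews--McCoy--Zheng framework—which, combined with the constraint $\nabla \bar H = 0$, yields a pointwise inequality dominating the concavity quadratic by the Codazzi one. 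The residual first-order piece $-2b_s^i \langle x, e_i\rangle g^{sr} F_{;r}$ is bounded by a Cauchy--Schwarz estimate using the uniform bounds on $x$ and $F$. Collecting everything gives $\partial_t \bar H \leq C - c_0 \bar H$ at $(p_0,t)$, and the maximum principle delivers the uniform bound $\bar H \leq \max\{\max_M \bar H(\cdot,0),\,C/c_0\}$, which proves \eqref{convex pre}.
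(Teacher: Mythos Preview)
Your overall strategy matches the paper's: bound $\bar H$ from above via the evolution equation in Proposition~\ref{evo of bar H}, use the boundary inequality $\nabla_\mu \bar H\le 0$ together with Hopf's lemma to reduce to an interior maximum, and exploit inverse-concavity of $F=H_k^{1/k}$ to control the quadratic Hessian terms. The gap is in your treatment of the first-order term $-2b^{ii}\langle x,e_i\rangle F_{;i}$.

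You assert that this term is ``bounded by a Cauchy--Schwarz estimate using the uniform bounds on $x$ and $F$.'' But the term involves $F_{;i}=\nabla_i F$, and at this stage only $C^0$ bounds on $F$ are available (Propositions~\ref{upper bound of F} and \ref{lower bound of F}); there is no a~priori control on $|\nabla F|$. Hence this term cannot be estimated independently. The paper's argument is organized differently at exactly this point: the inverse-concavity (Urbas) inequality
\[
F^{kl,pq}h_{kl;i}h_{pq;i} + 2F^{kk}b^{ss}h_{sk;i}^{2} \;\geq\; \frac{2}{F}\,F_{;i}^{2}
\]
is used not merely to show that the two quadratic gradient terms combine to something with a sign, but to extract the \emph{strictly negative} remainder $-\tfrac{2u}{F}(b^{ii})^2 F_{;i}^2$. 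That remainder is then played off against the first-order term via Young's inequality,
\[
-\frac{2u}{F}(b^{ii})^2 F_{;i}^2 \;-\; 2b^{ii}F_{;i}\langle x,e_i\rangle \;\leq\; \frac{F\,|x^T|^2}{2u},
\]
and the right-hand side is uniformly bounded by the $C^0$ and $C^1$ estimates already in hand. Note also that the critical-point information $\nabla\bar H=0$ is not used in the quadratic comparison; the Urbas inequality is pointwise, so that part of your sketch is unnecessary.
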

	\begin{proof}
Note that \eqref{convex pre} is equivalent to the uniform upper bound for $\bar{H}$. 
		If the maximum value of $\bar H$ is reached at $t=0$, then we are done. Otherwise, from $\n_{\mu}\bar H\leq 0$ on $\p M$ in equation \eqref{neumann  bar H} and the Hopf boundary Lemma, we have that $\bar{H}$   attains its maximum value at some interior point, say $p_{0}\in {\rm int}(\Sigma_t)$. 
		
		At $p_0$, by choosing an orthonormal frame $\{e_{i}\}_{i=1}^{n}$ on $T\S_t$ such that $(h_{ij})$ is diagonal, which follows that $(b^{ij})$ is also  diagonal. In view of equation \eqref{evolu of bar H}, 
		\begin{eqnarray}\notag		0&\leq & \mathcal{L}\bar{H}\\ 
				&=&\notag -2u(b^{ii})^{2}F^{kk}b^{ss}h_{sk;i}^{2}-u(b^{ii})^{2}F^{kl,pq}h_{kl;i}h_{pq; i}-2b^{ii}F_{;i}\<X, e_{i}\>\\
				&&+nuF+n-u\bar{H}F^{kl}(h^{2})_{kl}-F\bar{H}.\label{key-ine}
		\end{eqnarray}
			
	Note that for each fixed $i$,  there holds (see e.g. \cite[formula~(3.49)]{Urb})
		\begin{eqnarray}\label{key-ine-2}
			F^{kl,pq}h_{kl;i}h_{pq;i}+2F^{kk}b^{ss}h_{sk;i}^{2}\geq \frac{2}{F}F_{;i}^{2}.
		\end{eqnarray}
		Substituting \eqref{key-ine-2} into \eqref{key-ine}, at $p_{0}$, we have
		\begin{eqnarray*}
			0&\leq &\mathcal{L}\bar{H}\\
			&\leq &-\frac{2u}{F}(b^{ii})^{2}F_{;i}^{2}-2b^{ii}F_{;i}\<x,e_{i}\>+nuF+n-u\bar{H}F^{kl}(h^{2})_{kl}-F\bar{H}\\&\leq &\frac{F|x^{T}|^{2}}{2u}+nuF+n-u\bar{H}F^{kl}(h^{2})_{kl}-F\bar{H}.
		\end{eqnarray*}
		Together with Proposition \ref{lower bound of F},  Proposition \ref{upper bound of F}, and Proposition \ref{lower bound of u}, it implies  $$\bar{H}\leq C,$$ which follows the desired strictly convexity estimate \eqref{convex pre}. 
		
\end{proof} 

 \begin{rem}
     One can show that the convexity estimate \eqref{convex pre} for a large class of curvature function $F$ in \eqref{flow with capillary} using our approach. In fact, our proof also works for curvature function $F$ which is concave and inverse-concave.
 \end{rem}	

	\subsection{Long-time existence and convergence}\
	
With the uniform a priori estimates above, we are ready to prove the long-time existence and the global convergence of flow \eqref{flow with capillary}, which follows the proof of Theorem \ref{thm1.1}.

\begin{proof}[\textbf{Proof of Theorem \ref{thm1.1}}]
 From Proposition \ref{upper bound of u}, we have a uniform bound for solution $\varphi$ of \eqref{scalar flow with capillary-1}, and from  Proposition \ref{lower bound of u}, we have a uniform bound for $v$, and hence a uniform bound for  $\bar{\nabla} \varphi$.	Combining Proposition \ref{upper bound of F} and Proposition \ref{preserve convexity}, we know that the principal curvature $\k_j$ is uniformly bounded from above for all $1\leq j\leq n$, which together with uniform gradient estimates, implies that $\varphi$ is uniformly bounded in $C^2(\ov{\SS}^n_+\times[0, T^*))$ and the scalar equation in \eqref{scalar flow with capillary-1} is uniformly parabolic. Since $|\cos\theta|<1$, from the standard nonlinear parabolic theory with a strictly oblique boundary condition theory (cf.  \cite[Theorem 6.1, Theorem 6.4 and Theorem 6.5]{Dong}, also  \cite[Theorem 5]{Ura}  and \cite[Theorem 14.23]{Lie})), we conclude the uniform $C^\infty$ estimates and the long-time existence. And the convergence can be shown similarly by using the argument as in \cite[Section 8.2]{WXiong} or \cite[Proposition 4.12]{WWX1}, by adopting the monotonicity property of $\V_{0,\theta}(\widehat{\S_t})$ in  \eqref{volume-t}	along flow \eqref{flow with capillary} and the uniform a priori estimates.  We omit it here.
\end{proof}

\begin{prop}\label{monotonicity}
    Along flow \eqref{flow with capillary}, $\I_{k,\theta}(\widehat{\S_t})$ is monotone non-increasing with respect to time $t>0$.
\end{prop}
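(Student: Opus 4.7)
\emph{Plan.} The strategy is to examine the logarithmic derivative
\[
\frac{d}{dt}\log \I_{k,\theta}(\widehat{\S_t})=\frac{(\V_{k-1,\theta})'}{\V_{k-1,\theta}}-\frac{n+2-k}{n+1}\,\frac{(\V_{0,\theta})'}{\V_{0,\theta}},
\]
and to verify that both summands are non-positive. The first will come from $\tfrac{d}{dt}\V_{k-1,\theta}\le 0$, the second from $\tfrac{d}{dt}\V_{0,\theta}\ge 0$; positivity of $\V_{0,\theta},\V_{k-1,\theta}$ on convex capillary hypersurfaces, together with $\tfrac{n+2-k}{n+1}>0$, then deliver the sign.

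\textbf{Step 1: the enclosed volume.} The standard first variation gives $\tfrac{d}{dt}\V_{0,\theta}(\widehat{\S_t})=\int_{\S_t}f\,dA$ with $f=1+\cos\theta\<\nu,e\>-\<x,\nu\>H_k^{1/k}$. Applying the Minkowski identity \eqref{minkowski formula0} with the index there set to $1$ (so $H_0\equiv 1$ appears on the left and $H_1$ on the right) recasts this as
\[
\frac{d}{dt}\V_{0,\theta}(\widehat{\S_t})=\int_{\S_t}\<x,\nu\>\bigl(H_1-H_k^{1/k}\bigr)\,dA\ge 0,
\]
using the Newton--Maclaurin inequality $H_1\ge H_k^{1/k}$ (Lemma \ref{lem2.2}) on $\Gamma_k$ together with the positivity $\<x,\nu\>>0$ from Proposition \ref{lower bound of u}.

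\textbf{Step 2: the $(k-1)$th quermassintegral.} The first-variation formula for capillary quermassintegrals from \cite{WWX1} reads $\tfrac{d}{dt}\V_{k-1,\theta}(\widehat{\S_t})=c_{k-1}\int_{\S_t}f\,H_{k-1}\,dA$ for an explicit positive constant $c_{k-1}$; the boundary term encoded in \eqref{quermassintegrals} is precisely what is needed so that, under the flow's prescribed tangential part $\T|_{\p M}=f\cot\theta\,\mu$, no boundary leftover on $\p\S_t$ survives. Plugging in $f$ and applying \eqref{minkowski formula0} with its index equal to the flow parameter $k$ produces
\[
\frac{d}{dt}\V_{k-1,\theta}(\widehat{\S_t})=c_{k-1}\int_{\S_t}\<x,\nu\>\bigl(H_k-H_{k-1}H_k^{1/k}\bigr)\,dA\le 0,
\]
since Newton--Maclaurin rearranges to $H_{k-1}\ge H_k^{(k-1)/k}$, and thus $H_{k-1}H_k^{1/k}\ge H_k$ pointwise on $\S_t$.

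The only non-routine part of this plan is the bookkeeping in Step 2: one must confirm that differentiating the capillary quermassintegral \eqref{quermassintegrals} along the flow with its specific tangential component $\T|_{\p M}=f\cot\theta\,\mu$ yields a clean bulk integral of $f\,H_{k-1}$ with no $\p\S_t$-remainder, which is exactly where the $\cos\theta\sin^k\theta$-weighted boundary correction in \eqref{quermassintegrals} is designed to cancel the divergence terms produced on the contact locus. This cancellation is the content of the variational identity proved in \cite{WWX1}; once accepted as input, the remainder of the argument is a direct two-line assembly of Minkowski \eqref{minkowski formula0} and the Newton--Maclaurin chain.
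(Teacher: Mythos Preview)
Your proposal is correct and follows essentially the same route as the paper: both compute $\tfrac{d}{dt}\V_{0,\theta}\ge 0$ via the Minkowski formula \eqref{minkowski formula0} with index $1$ and the Newton--Maclaurin inequality $H_1\ge H_k^{1/k}$, and $\tfrac{d}{dt}\V_{k-1,\theta}\le 0$ via the variational identity from \cite{WWX1} (your constant $c_{k-1}$ equals $\tfrac{n+2-k}{n+1}$), the Minkowski formula with index $k$, and $H_{k-1}\ge H_k^{(k-1)/k}$. Your packaging via the logarithmic derivative is a cosmetic variant of the paper's direct combination of the two monotonicities; the only extra ingredient you invoke is the positivity $\V_{k-1,\theta}>0$, which is indeed needed (and also implicitly used in the paper) when passing from ``numerator non-increasing, denominator non-decreasing'' to ``ratio non-increasing''.
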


\begin{proof}
    From the Minkowski formula in \cite[Proposition 2.5]{WWX1}, for capillary hypersurfaces $\S_t$  in the half-space, there holds
\begin{eqnarray}\label{minkowski formula}
		\int_{\Sigma_{t}}H_{k-1}\left(1+\cos\theta\<\nu, e\>\right)dA= \int_{\Sigma_{t}}H_{k}\<x, \nu\>dA,
	\end{eqnarray}
together with  \cite[Theorem 2.7]{WWX1} and Newton-MacLaurin inequality $H_1\geq H_k^{\frac 1 k}$ for $2\leq k\leq n$, we derive 
    \begin{eqnarray}\label{volume-t}
        \frac{d}{dt} \V_{0,\theta}(\widehat{\S_t})&=& \int_{\S_t} \left[ (1+\cos\theta \<\nu,e\>)-H_k^{\frac 1 k} \<x,\nu\> \right ]dA\\&=&\int_{\S_t}\left( H_1 -H_k^{\frac 1 k} \right)\<x,\nu\>dA\geq 0.\notag
    \end{eqnarray}
Using  \cite[Theorem 2.7]{WWX1} again and Newton-MacLaurin inequality  $H_{k-1} \geq H_k^{\frac{k-1}{k}}$  for $k\geq 2$, 
    \begin{eqnarray*}
        \frac{d}{dt} \V_{k-1,\theta}(\widehat{\S_t})&= & \frac{n+2-k}{n+1} \int_{\S_t} H_{k-1} \left (1+\cos\theta \<\nu,e\> -H_k^{\frac 1 k} \<x,\nu\> \right) dA\\& \leq  &\int_{\S_t} \left[H_{k-1} (1+\cos\theta \<\nu,e\>) -H_k \<x,\nu\> \right] dA=0,
    \end{eqnarray*}where the last equality follows from \eqref{minkowski formula} again.

Hence $\I_{k,\theta}(\wh{\S_t})$ is monotone decreasing for the time $t$ from the above two inequalities.  
\end{proof}

As a direct consequence of Theorem \ref{thm1.1} and Proposition \ref{monotonicity}, one can obtain Corollary \ref{Alex}.
\begin{proof}[\textbf{Proof of Corollary \ref{Alex}}]\
	
Using Theorem \ref{thm1.1} and Proposition \ref{monotonicity}, we know that \eqref{af ineq} holds for the strictly convex case. When $\S$ is only convex, we use the approximation argument as in \cite{GL09} to prove it.  While the equality characterization in Corollary \ref{Alex}   can be established along the same way as \cite{SWX, WeX21, WWX1}. Hence we complete the proof.  	

\end{proof} 	

 \ 
 
\noindent\textit{Acknowledgment}: 
Part of this work was carried out while L.W. was visiting the Institut für Mathematik at Goethe Universität Frankfurt in October 2023. He would like to express his sincere gratitude to Prof. Julian Scheuer and the institute for their warm hospitality. Both authors would like to thank Professor Guofang Wang for his valuable discussions on this subject and constant support.

\printbibliography 

\end{document}